\def\R{{\mathbb R}}
\def\N{{\mathbb N}}
\newtheorem {definition} {Definition}
  \newtheorem {remark}     {Remark}
  \newtheorem {theorem}    {Theorem}
  \newtheorem {corollary}  {Corollary}
  \newtheorem {proposition}{Proposition}
  \newtheorem {problem} {Problem}
\begin{document}
\title{A note on optimal regularity and regularizing effects of point mass coupling for a heat-wave system}

\author{Boris Muha\thanks{ Faculty of Science,
Department of Mathematics,University of Zagreb}}
\date{}

\maketitle

\begin{abstract}
We consider a coupled $1D$ heat-wave system which serves as a simplified fluid-structure interaction problem. The system is coupled in two different ways: the first, when the interface does not have mass and the second, when the interface does have mass. We prove an optimal regularity result in Sobolev spaces for both cases. The main idea behind the proof is to reduce the coupled problem to a single nonlocal equation on the interface by using Neummann to Diriclet operator. Furthermore, we show that point mass coupling regularizes the problem and quantify this regularization in the sense of Sobolev spaces.
\end{abstract}
\noindent
{\bf MSC2010}: 35M33; 35B65; 74F10; 35Q35 
\\
{\bf Keywords}: Heat-wave system; Fluid-structure interaction; Hyperbolic-parabolic coupling; Coupling through point mass; Optimal regularity
\section{Introduction}

In this note we analyze a coupled system consisting of the linear wave equation and the linear heat equation coupled through a common interface. This system can be viewed as a simplified fluid-structure interaction problem \cite{ZhangZuazuaCRAS1,ZhangZuazuaCRAS2,ZhangZuazuaJDE04,ZhangZuazuaARMA07}. Fluid-structure interaction (FSI) problems naturally arise in many applications and have been extensively studied from both analytical and numerical point of view (see e.g. \cite{FSIforBio,GaldiHandbook,BorSun} and references within). Despite recent progress, the development of a comprehensive well-posedness and regularity theory for FSI problems still remains a challenge. One of the main difficulties in analysis of FSI problems is hyperbolic-parabolic coupling and corresponding mismatch in regularity of solutions. The purpose of this note is to analyze this mismatch on the simplified problem and to answer the following two questions:
\begin{enumerate}
\item What is optimal regularity for the considered system in the following sense: what is the minimal regularity for the wave component that allows the heat component to develop full parabolic regularity?
\item What is the answer to the first question in the case when the interface has a mass, i.e. when coupling is realized through point mass? Does the coupling through point mass provide additional regularity to the problem?
\end{enumerate} 
We believe that answers to these questions for the simplified problem will give us better understanding of more complex and realistic FSI models. 

Let us now briefly describe the main results of this paper. Let $u_0$ be the initial data for the heat equation and let $(v_0,v_1)$ be the initial data for the wave equation. We prove that:
\begin{enumerate}
\item Optimal regularity is obtained for $(u_0,v_0,v_1)\in H^{2s+1/2}\times H^{s+1}\times H^s$, $s\geq 0$. Then the solution of the heat component $u$ satisfies $u\in L^2(H^{2s+3/2})$ and the solution of the wave component $v$ satisfies $v\in C(H^{s+1})$, where $L^2(H^s)$ is abbreviation for $L^2(0,T;H^s(\Omega))$. Notice that the obtained function spaces are non-symmetric and are not connected to the energy of the problem (on neither level). This regularity result is optimal in the following somewhat non-standard sense. If one takes more regular initial data for the wave equation, i.e. $(v_0,v_1)\in H^{r+1}\times H^r$, $r>s$, the regularity of the solution $u$ would not increase, i.e. we would still have $u\in L^2(H^{2s+3/2})$. On the other hand, if one takes the wave initial data which are less regular, i.e. $(v_0,v_1)\in H^{r+1}\times H^r$, $r<s$, then the parabolic part of the equation will not develop the full parabolic regularity, i.e. $u\notin  L^2(H^{2s+3/2})$. This will be made precise by the regularity theorem for the heat-wave system (see Theorem~\ref{existenceThm1}).
\item If the coupling is done through point mass, the system gains $1/2$ of the derivative in a sense that initial data $(u_0,v_0,v_1)\in H^{2s+1/2}\times H^{s+1/2}\times H^{s-1/2}$ ( i.e. with $1/2$ derivative less in the wave component) produce the solution with the same regularity of the heat component as in the case without point mass, i.e. $u\in L^2(H^{2s+3/2})$. Naturally, the wave component in this case follows the regularity of the initial data, i.e. $v\in C(H^{s+1/2})$. This regularization effect of the interface with mass was noticed in \cite{SunBorMulti} where the authors considered a more realistic moving boundary fluid-multi-layered structure problem motivated by blood flow applications (for a similar effect in a different context see \cite{HansenZuazua}). However, in this work we quantify this regularization and give an explicit formula that explains the mechanism behind this regularization.
\end{enumerate}

\subsection{Brief literature review}
The same simplified FSI model as in this note was analyzed in \cite{ZhangZuazuaCRAS1,ZhangZuazuaCRAS2,ZhangZuazuaJDE04,ZhangZuazuaARMA07} where the authors addressed  boundary control, observability, stabilization and long time behavior of the solution. Rational decay rates for this model have also been studied in \cite{AvalosTriggiani2,Duyckaerts}.

In the context of strong regular solutions for FSI problems where both the fluid and the solid occupy a domain with the same spatial dimension (i.e. an elastic body is not described with some lower dimensional model), the following results have been obtained. 
A linear FSI problem on a fixed domain where $2D$ or $3D$ Stokes equations are coupled with the equations of $2D$ or $3D$ linear elasticity was studied in \cite{Gunzburger}. The existence and uniqueness of a strong solution was proved with initial data $(u_0,v_0,v_1)\in H^2\times H^2\times H^2$. A similar problem was considered in \cite{AvalosLasieckaTriggiani}, where the existence and uniqueness of solution $(u,v)\in L^2(H^2)\times L^{\infty}(H^2)$ was obtained with initial data $(u_0,v_0,v_1)\in H^2\times H^2\times H^1$. The authors noted that additional regularity for the initial structure displacement is needed in order to take advantage of the parabolic regularity for the fluid component (Remark 1.2 and Theorem 2.1). An analogous result for the nonlinear FSI problem defined on the fixed domain was proved in \cite{BarGruLasTuff}. 

D.~Coutand and S.~Shkoller proved the existence, locally in time, of a unique, regular solution for
a moving boundary FSI problem between a viscous, incompressible fluid in $3D$ and a $3D$ structure, immersed in the fluid,
where the structure was modeled by the equations of linear \cite{CSS1}, or quasi-linear \cite{CSS2} elasticity. In \cite{CSS1} initial data have the following regularity $(u_0,v_0,v_1)\in H^5\times H^3\times H^2$, while the solution satisfies $(u,v)\in L^2(H^3)\times C(H^3)$. Kukavica and Tuffaha \cite{Kuk,Kuk2} considered a similar problem where the structure was modeled by a linear wave equation. In \cite{Kuk} they proved existence, locally in time, of solution $(u,v)\in L^2(H^3)\times C^0(H^{11/4-\varepsilon})$, $\varepsilon>0$, with initial data $(u_0,v_0,v_1)\in H^3\times H^3\times H^2$, while in \cite{Kuk2}
initial data $(u_0,v_0,v_1)\in H^3\times (H^{5/2+r}\times H^{3/2+r})$ yield solution $(u,v)\in L^{\infty}(H^{5/2+r})\times C(H^{5/2+r})$, $r\in (0,(\sqrt{2}-1)/2)$. Furthermore, in \cite{IgnatovaKukavica}
the existence of solution $(u,v)\in L^{\infty}(H^3)\times C(H^3)$ was established with initial data $(u_0,v_0,v_1)\in H^4\times H^3\times H^2$. Recently, a similar problem was studied in \cite{raymond2013fluid} where the authors proved the existence of a unique solution $(u,v)\in L^2(H^{2+l})\times C(H^{7/4+l/2})$ with initial data $(u_0,v_0,v_1)\in H^{1+l}\times H^{3/2+l+\beta}\times H^{1/2+l+\beta}$, where $l\in (1/2,1)$, $\beta>0$.

A nonlinear, unsteady, moving boundary, fluid-structure interaction (FSI) problem in which the structure is composed of two layers: a thick layer, and a thin layer which serves as a fluid-structure interface with mass was studied in \cite{SunBorMulti,SunBorHyp2012} where the existence of a weak solution was proved. The authors noted that the presence of a thin fluid-structure interface with mass regularizes solutions of the coupled problem. These observations were numerically confirmed in \cite{multi-layered}. This is reminiscent of the result from \cite{HansenZuazua,KochZauzua} where two linear wave equations were coupled via elastic interface with mass and the well-posedness result was proved by taking advantage of the regularizing effects of the elastic interface.

We would like to emphasize that in most of the references cited in this short overview, the considered models are much more complicated and realistic than the model considered in this note.  Therefore, it is not clear that the presented optimal regularity result can also be obtained in these cases. Moreover, the statements of the cited results are slightly adjusted  to be comparable to the simplified $1D$ case. However, we believe that the presented analysis will provide better understanding of asymmetric regularity for the parabolic-hyperbolic systems and of regularization by point mass coupling (or coupling via elastic interface in a more realistic case).

\subsection{Notation}
In this paper we mostly use the standard notations. For example, we denote by $H^s(a,b)$ the Sobolev space of order $s$ on $(a,b)\subset\R$, $s\in\R$, and $H^s_0(a,b)$ is the closure of ${\cal D}(a,b)$ in $H^s(a,b)$ (see e. g. \cite{LionsMagenes}). Moreover, we introduce the following notation:
$$
H_{00}^s(0,T)=\overline{{\mathcal D}((0,T])}^{H^s(0,T)}=\{f\in H^s(0,T):f^{(j)}(0)=0,\; 0\leq j<s-\frac{1}{2}\}.
$$
Furthermore, we define the function spaces appropriate for analysis of parabolic problems (see e.g. \cite{LionsMagenes2}).
\begin{equation}\label{ParabolicSpace}
H^{s,2s}((0,T)\times (-1,0))=L^2(0,T;H^{2s}(-1,0))\cap H^s(0,T;L^2(-1,0)),\; s\geq 0.
\end{equation}
Finally, let us define the hyperbolic solution spaces as follows:
\begin{equation}
\begin{array}{c}
V^{s}((0,T)\times (0,1))=\{v\in C([0,T];H^s(0,1)):
\\ \\
\partial_t^k v\in C([0,T];H^{s-k}(0,1)),\; k=1,\dots, \lfloor s \rfloor \},\; s\geq 0.
\end{array}
\label{HyperbolicSpace}
\end{equation}
\section{Problem description}
We consider the following coupled problems of parabolic-hyperbolic type which can be viewed as a simplified fluid structure problem and a fluid-composite structure problem, respectively.
\begin{problem}{\rm (plain heat-wave coupling)}
\\
Find $(u,v)$ such that
\begin{equation}
\left \{
\begin{array}{lcr}
\partial_t u=\partial^2_x u,&\quad {\rm in}\quad  &(0,T)\times (-1,0), \\
\partial^2_t v=\partial^2_x v,&\quad {\rm in}\quad  &(0,T)\times (0,1),
\end{array}
\right .
\label{HeatWave}
\end{equation}
\begin{equation}
\left \{
\begin{array}{lr}
u(t,0)=\partial_t v(t,0),&\quad t\in(0,T),\\
\partial_x u(t,0)=\partial_x v(t,0),&\quad t\in (0,T),
\end{array}
\right .\
\label{HeatWaveCoupling}
\end{equation}
\begin{equation}
\left \{
\begin{array}{lr}
u(t,-1)=v(t,1)=0,&\quad t\in (0,T),\\
u(0,x)=u_0(x),&\quad x\in (-1,0),\\
v(0,x)=v_0(x),\; \partial_t v(0,x)=v_1(x),&\quad x\in (0,1).

\end{array}
\right .
\label{IBData}
\end{equation}
\label{HW}
\end{problem}
Coupling conditions~\eqref{HeatWaveCoupling} can be viewed as continuity of velocity~\eqref{HeatWaveCoupling}$_1$ (kinematic coupling condition) and continuity of normal stresses (dynamic coupling condition) for the simplified FSI model (see e.g. \cite{ZhangZuazuaARMA07}).

Let $h(t)$ denote the displacement of the point mass which serves as the heat-wave interface.
\begin{problem}{\rm (coupling through point mass)}
\\
Find $(u,v,h)$ such that
\begin{equation}
\left \{
\begin{array}{lcr}
\partial_t u=\partial^2_x u,&\quad {\rm in}\quad  &(0,T)\times (-1,0), \\
\partial^2_t v=\partial^2_x v,&\quad {\rm in}\quad  &(0,T)\times (0,1),
\end{array}
\right .
\label{Mass}
\end{equation}
\begin{equation}
\left \{
\begin{array}{lr}
u(t,0)=h'(t)=\partial_t v(t,0),&\quad t\in(0,T),\\
h''(t)=\partial_x v(t,0)-\partial_x u(t,0),&\quad t\in (0,T),
\end{array}
\right .\
\label{MassCoupling}
\end{equation}
\begin{equation}
\left \{
\begin{array}{lr}
u(t,-1)=v(t,1)=0,&\quad t\in (0,T),\\
u(0,x)=u_0(x),&\quad x\in (-1,0),\\
h(0)=0,\; h'(0)=0, \\
v(0,x)=v_0(x),\; \partial_t v(0,x)=v_1(x),&\quad x\in (0,1).

\end{array}
\right .
\label{MassIBData}
\end{equation}
\label{HWMass}
\end{problem}
Notice that dynamic coupling condition~\eqref{MassCoupling}$_2$ is exactly the second Newton's Law of motion which states that the point mass acceleration is balanced by difference of normal stresses from the wave and the heat equations. Problem 2 can be viewed as a simplified version of the FSI problem considered in \cite{multi-layered,SunBorMulti}

In order to prove regularity results for the considered problems, one has to assume that initial data satisfy certain compatibility conditions. However, to avoid technical complications and to make the text more accessible to the reader, we have chosen the simplest kind of compatibility conditions rather that pursuing full generality.  Therefore, we will assume that the initial data are in $H_0^s$ spaces and have chosen $h(0)=h'(0)=0$ as the initial conditions for $h$. 

The first step is to reformulate Problem~\ref{HW} and Problem~\ref{HWMass} in terms of
trace function $g$ defined on $(0,T)$. 
The main tool will be Neumann to Dirichlet operator for the heat equation. Since we also use D'Alembert formula, time $T$ will depend on a slope of the characteristics, in particular in the considered case we assume $T=1$. However, our argument can be iterated and the results can be extended to arbitrary $T$ (including $T=\infty$), see Remark \ref{Global}. Therefore we will continue to write $T$ instead of $1$, also to allow the reader to keep track of the different integration (w.r.t. space or time variable) more easily.
\begin{definition}[Neumann to Dirichlet operator]
Let $g:[0,T]\to\R$ and $u_0:[-1,0]\to\R$. Furthermore, let $Sg$ be a solution of the following
initial boundary value problem:
\begin{equation}\label{DefL1}
\begin{array}{c}
\partial_t (Sg)=\partial^2_x (Sg),\quad {\rm in }\quad (0,T)\times (-1,0),\\
(Sg)(.,-1)=0,\; (Sg)(0,.)=u_0,\; \partial_x (Sg)(.,0)=g.
\end{array}
\end{equation}
We define Neumann to Dirichlet operator $L$ with the following formula:
\begin{equation}\label{ND}
(L_{u_0}g)(t):=(Sg)(t,0),\quad t\in (0,T),
\end{equation}
where equality is taken in the trace sense.
\end{definition}
\begin{remark}
Poincar\' e-Steklov Neumman to Dirichlet operator for Laplace's equation was used in \cite{AddedMassCNG} in the analysis of the so-called added-mass effect and its connection to stability issues for the numerical schemes for the FSI problems involving the lower dimensional elastic models.
\end{remark}
Since problem~\eqref{DefL1} is linear and the trace operator is also linear, we can decompose $L_{u_0}$ in the following way:
\begin{equation}\label{Lu0}
L_{u_0}g=L_{u_0}{\bf 0}+L_0g, 
\end{equation}

\begin{proposition}\label{LIso}
Let $u_0\in H^r_0(-1,0)$, $r\geq 1$, and $g\in H_{00}^s(0,T)$, $s\geq 1/4$. Then operator $L_{u_0}$ is well-defined. Furthermore, the following statements hold:
\begin{enumerate}
\item Let $r\neq 2n+2$ and $r\neq n+1/2$, $n\in\N_0$, then we have $L_{u_0}{\bf 0}\in H^{r/2+1/4}(0,T)$.
\item Let $s\neq n+3/4$ and $s\neq n/2$, $n\in\N_0$, then 
$L_0:H_{00}^s(0,T)\to H_{00}^{s+1/2}(0,T)$ is an isomorphism.
\end{enumerate}
\end{proposition}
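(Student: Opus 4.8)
The plan is to reduce both statements to two classical facts about the linear heat equation in the anisotropic parabolic spaces $H^{\sigma,2\sigma}$ of Lions--Magenes: maximal regularity for the initial/boundary value problem, and the anisotropic trace theorems on the lateral boundary $\{x=0\}$. The bookkeeping of exponents that drives everything is the following. If $w\in H^{\sigma,2\sigma}((0,T)\times(-1,0))$, then its Dirichlet lateral trace $w(\cdot,0)$ belongs to $H^{\sigma-1/4}(0,T)$ and its Neumann lateral trace $\partial_x w(\cdot,0)$ belongs to $H^{\sigma-3/4}(0,T)$, while its time trace $w(0,\cdot)$ belongs to $H^{2\sigma-1}(-1,0)$; each of these maps is moreover onto the corresponding space once the natural corner compatibility conditions are imposed. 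Throughout I work with the decomposition \eqref{Lu0}, treating $L_{u_0}\mathbf 0$ (zero Neumann data) and $L_0 g$ (zero initial data) separately.

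For statement 1 I would first show that the solution $S\mathbf 0$ of \eqref{DefL1} with $g=\mathbf 0$ and $u_0\in H^r_0(-1,0)$ lies in $H^{(r+1)/2,\,r+1}((0,T)\times(-1,0))$. This is maximal parabolic regularity matched through the time trace: since $w(0,\cdot)=u_0$ must sit in $H^{2\sigma-1}$, the choice $2\sigma-1=r$, i.e. $\sigma=(r+1)/2$, is exactly the one making the solution map bounded. Here the use of $H^r_0$ rather than $H^r$, together with the excluded values $r\neq n+1/2$ and $r\neq 2n+2$, encodes the corner compatibility conditions at $(0,0)$ and $(0,-1)$ (the paper's deliberately simple choice of compatibility). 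Applying the Dirichlet lateral trace theorem with $\sigma=(r+1)/2$ then gives $L_{u_0}\mathbf 0=(S\mathbf 0)(\cdot,0)\in H^{\sigma-1/4}(0,T)=H^{r/2+1/4}(0,T)$, which is the claim.

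For statement 2, boundedness follows the same template in the opposite direction. Given $g\in H^s_{00}(0,T)$ I would solve the nonhomogeneous Neumann problem \eqref{DefL1} with $u_0=0$; matching the Neumann lateral trace exponent $\sigma-3/4=s$ forces $\sigma=s+3/4$, so that $Sg\in H^{s+3/4,\,2s+3/2}$, the vanishing initial datum and $g\in H^s_{00}$ providing the compatibility at $(0,0)$. The Dirichlet lateral trace theorem then yields $L_0 g=(Sg)(\cdot,0)\in H^{(s+3/4)-1/4}(0,T)=H^{s+1/2}(0,T)$, and the zero initial condition forces the trace to vanish to the correct order at $t=0$, placing it in $H^{s+1/2}_{00}(0,T)$. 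To obtain the isomorphism I would exhibit the inverse explicitly as the Dirichlet-to-Neumann map: given $d\in H^{s+1/2}_{00}(0,T)$, solve the heat equation with $u_0=0$ and Dirichlet datum $d$ at $x=0$; by Dirichlet maximal regularity this solution again lies in $H^{s+3/4,\,2s+3/2}$ (now matched through $\sigma-1/4=s+1/2$), so its Neumann trace $\partial_x(\cdot)(\cdot,0)$ lies in $H^s_{00}(0,T)$. Uniqueness for the heat equation with zero initial data, the homogeneous condition at $x=-1$, and a single prescribed condition at $x=0$ shows that this map and $L_0$ are mutual inverses, giving the isomorphism; the excluded values $s\neq n/2$ and $s\neq n+3/4$ are precisely the thresholds at which the characterizations of $H^s_{00}$ and $H^{s+1/2}_{00}$ by vanishing conditions jump.

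The routine parts are the exponent arithmetic and the invocation of the Lions--Magenes regularity and trace theorems. The main obstacle I expect is the careful matching of compatibility conditions at the space--time corner $(t,x)=(0,0)$: I must verify not merely that the traces land boundedly in the $H^\sigma$ scale, but that they are \emph{onto} the subspaces $H^r_0$ and $H^{\,\cdot}_{00}$ with the zero initial/boundary compatibility built in, since the surjectivity and boundedness of the inverse in statement 2 hinge on this. Tracking exactly which compatibility condition becomes borderline is what produces, and justifies the need to exclude, the special values of $r$ and $s$; handling these threshold cases cleanly, rather than deriving the generic exponents, is where the real work lies.
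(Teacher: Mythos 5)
Your proposal follows essentially the same route as the paper: decompose $L_{u_0}$ via \eqref{Lu0}, invoke Lions--Magenes maximal parabolic regularity in the $H^{\sigma,2\sigma}$ scale (the paper cites Theorem 4.6.2 of that reference) with the exponent matching $\sigma=(r+1)/2$ resp.\ $\sigma=s+3/4$, and conclude by the anisotropic lateral trace theorems; your exponent arithmetic agrees with the paper's. The only addition is that you make the isomorphism in statement 2 explicit by constructing the inverse as the Dirichlet-to-Neumann map, a detail the paper leaves implicit in its citation of the same regularity theorem.
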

\begin{proof}
The fact that operator $L_{u_0}$ is well-defined is a direct consequence of Theorem 4.4.3. (see also Remark 4.4.1.) from \cite{LionsMagenes2}, which states that problem~\eqref{DefL1} has a unique solution $Sg\in H^{1,2}((0,T)\times (-1,0))$ for $u_0\in H^1_0(-1,0)$ and $g\in H^{1/4}(0,T)$. Therefore, the trace operator in~\eqref{ND} is well-defined (see e.g. Theorem 4.2.1. in \cite{LionsMagenes2}).

To prove statements 1 and 2 we need a regularity result for problem~\eqref{DefL1}, namely Theorem 4.6.2. from \cite{LionsMagenes2}. Since our initial and boundary data satisfy $u_0\in H^r_0(-1,0)$ and $g\in H^s_{00}(0,T)$, respectively, the compatibility conditions in point $(0,0)$ are satisfied. Furthermore, restrictions on parameters $r$ and $s$ are necessary to satisfy the assumptions of Theorem 4.6.2. from \cite{LionsMagenes2}.

To prove statement 1, we apply Theorem 4.6.2. to problem~\eqref{DefL1} with zero Neumann boundary data to prove that the solution belongs to space $H^{(r+1)/2,r+1}((0,T)\times (-1,0))$. Now, the statement follows by direct application of the trace theorem for $H^{s,2s}$ spaces and the fact that $u_0\in H^r_0(-1,0)$.

Statement 2 can be proven with analogous reasoning. Namely, now we consider problem~\eqref{DefL1} with zero initial data, and apply Theorem 4.6.2. (see also Remark 4.6.3. in \cite{LionsMagenes2}) and the trace theorem.
\end{proof}

We use D'Alembert formula in the wave subdomain to rewrite the full coupled problem as a single nonlocal equation on the interface.
Let us denote with $I$ the area where the solution of the wave equation is only influenced by the interface
$$
I=\{(t,x)\in (0,1)\times (0,\frac 1 2):|2t-1|\leq 1-2x\},
$$
and by $II$ we denote the area where the solution of the wave equation is only influenced by the initial data (Figure \ref{fig})
$$
II=\{(t,x)\in (0,\frac 1 2)\times (0,1): |2x-1|\leq 1-2t\}.
$$
\begin{figure}[ht]
\centering{
\includegraphics[scale=1]{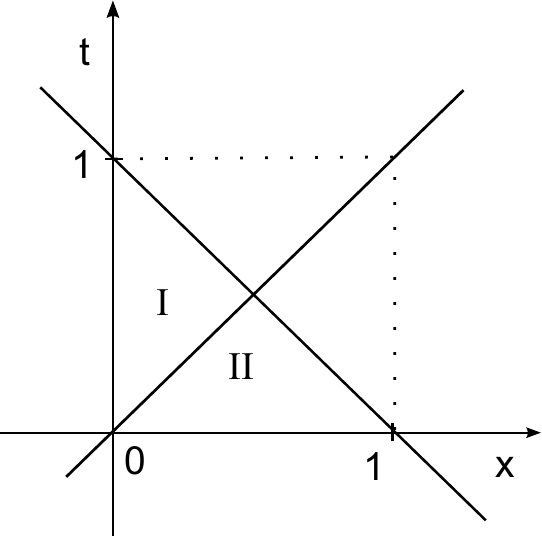}
}
\caption{Characteristics}
\label{fig}
\end{figure}
Solution $v$ on $II$ is given by d'Alembert's formula:
\begin{equation}\label{DA2}
v(t,x)=\frac 1 2 (v_0(x-t)+v_0(x+t))+\frac 1 2\int_{x-t}^{x+t}v_1(s)ds,\quad (t,x)\in II.
\end{equation}
Similarly as in \cite{HansenZuazua}, we exploit the fact that $1D$ wave equation is symmetric in $t$ and $x$ variables and therefore $v$ on $I$ is also given by d'Alembert's formula when the roles of $t$ and $x$ are switched, and we consider region $I$ as being influenced only by the boundary data on $(0,1)\times\{0\}$. Therefore, we have the following formula for solution $v$:
\begin{equation}\label{DA1}
v(t,x)=\frac 1 2(h(t-x)+h(t+x))+\frac 1 2\int_{t-x}^{t+x}\partial_x v(s,0)ds,\quad (t,x)\in I,
\end{equation}
where $h(t)=v(t,0)$. Now, assuming the continuity of $v$ on the ray given by formula $t=x$ and using formulas \eqref{DA1} and \eqref{DA2} at point $(\frac t 2,\frac t 2)$ we get:
$$
h(t)+\int_0^t\partial_x v(s,0)ds=v_0(t)+\int_{0}^tv_1(s)ds.
$$
By differentiating this equality we get
\begin{equation}\label{InterfaceEq}
c(s)+\partial_x v(s,0)=v_0'(s)+v_1(s),
\end{equation}
where $c(t)=h'(t)$ is the interface velocity.
Now we are in position to reformulate Problems \ref{HW} and \ref{HWMass} in terms of the Neumann to Dirichet operator $L_{u_0}$.
\begin{proposition} Let $L_{u_0}$ be Neumann to Diriclet operator defined by~\eqref{ND},
let $g=\partial_x u(.,0)$ and $T=1$. Then for the wave equation initial data $(v_0,v_1)$ the following statements hold:  
\begin{enumerate}
\item
Problem \ref{HW} is formally equivalent to the following nonlocal problem:
\\
Find $g$ such that
\begin{equation}\label{HWequiv}
(I+L_{u_0})g=v_0'+v_1\;{\rm in}\; (0,T).
\end{equation}
\item
Problem \ref{HWMass} is formally equivalent to the following nonlocal problem:
\\
Find $g$ such that
\begin{equation}\label{HWMassequiv}
\begin{array}{c}
(I+L_{u_0})g+(L_{u_0}g)'=v_0'+v_1\;{\rm in}\; (0,T),
\\
(L_{u_0}g)(0)=0.
\end{array}
\end{equation}
\end{enumerate}
\label{equiv}
\end{proposition}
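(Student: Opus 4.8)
The plan is to translate every coupling condition into a relation between $g=\partial_x u(\cdot,0)$ and the two interface traces of the wave solution, and then to collapse these relations using the interface identity~\eqref{InterfaceEq}. The one structural fact I would set up first is the dictionary provided by the Neumann to Dirichlet operator: if $u$ solves the heat equation in $(0,T)\times(-1,0)$ with $u(\cdot,-1)=0$, $u(0,\cdot)=u_0$ and $\partial_x u(\cdot,0)=g$, then by definition~\eqref{ND} its interface temperature is $u(\cdot,0)=L_{u_0}g$. The second fact is already available, namely that any wave solution on $(0,1)$ with $v(t,1)=0$ and initial data $(v_0,v_1)$ satisfies~\eqref{InterfaceEq}, i.e. $c+\partial_x v(\cdot,0)=v_0'+v_1$ on $(0,T)$ with $c=\partial_t v(\cdot,0)$. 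Everything below is a substitution into this single identity.

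For Problem~\ref{HW} (forward direction), I read the kinematic condition~\eqref{HeatWaveCoupling}$_1$ as $L_{u_0}g=u(\cdot,0)=\partial_t v(\cdot,0)=c$, and the dynamic condition~\eqref{HeatWaveCoupling}$_2$ as $g=\partial_x u(\cdot,0)=\partial_x v(\cdot,0)$. Substituting $c=L_{u_0}g$ and $\partial_x v(\cdot,0)=g$ into~\eqref{InterfaceEq} yields $L_{u_0}g+g=v_0'+v_1$, which is~\eqref{HWequiv}. For Problem~\ref{HWMass}, the kinematic condition~\eqref{MassCoupling}$_1$ again gives $c=h'=L_{u_0}g$, while Newton's law~\eqref{MassCoupling}$_2$ reads $c'=(L_{u_0}g)'=\partial_x v(\cdot,0)-g$. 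Eliminating $\partial_x v(\cdot,0)$ with~\eqref{InterfaceEq}, i.e. $\partial_x v(\cdot,0)=v_0'+v_1-L_{u_0}g$, produces $(L_{u_0}g)'+L_{u_0}g+g=v_0'+v_1$, which is the first line of~\eqref{HWMassequiv}; the second line $(L_{u_0}g)(0)=0$ comes directly from $c(0)=h'(0)=0$ in~\eqref{MassIBData}.

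For the converse direction I would reconstruct a full solution from a given $g$. Solving~\eqref{DefL1} produces $u$ with $\partial_x u(\cdot,0)=g$ and $u(\cdot,0)=L_{u_0}g$; I then define $v$ as the wave solution carrying the Neumann datum $\partial_x v(\cdot,0)=g$ (for Problem~\ref{HW}) or the velocity datum $\partial_t v(\cdot,0)=L_{u_0}g$ (for Problem~\ref{HWMass}), with $h$ recovered by $h'=L_{u_0}g$, $h(0)=0$. Since~\eqref{InterfaceEq} holds automatically for this reconstructed $v$, the nonlocal equation is exactly the algebraic statement that forces the remaining coupling condition: in the plain case it turns $c+g=v_0'+v_1$ into $c=L_{u_0}g=u(\cdot,0)$, recovering the kinematic condition; in the point-mass case it turns $c+\partial_x v(\cdot,0)=v_0'+v_1$ into Newton's law. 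Thus solvability of the coupled problem and of the nonlocal problem are equivalent, which is the content I want.

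The step I expect to be most delicate is not the algebra but the converse reconstruction, where one must check that the boundary datum placed on the wave equation is compatible with the initial data at the corner $(0,0)$ and that~\eqref{InterfaceEq}, derived from continuity of the d'Alembert representation across $t=x$, genuinely holds for the reconstructed $v$ for all $t\in(0,T)$ with $T=1$. Because the proposition only claims \emph{formal} equivalence, I would keep the trace and regularity issues implicit here and defer them to the mapping properties of $L_{u_0}$ established in Proposition~\ref{LIso}, treating the present argument as the purely structural reduction of each coupled system to its interface equation.
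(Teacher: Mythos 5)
Your proposal is correct and follows essentially the same route as the paper: identify $c=h'=\partial_t v(\cdot,0)=u(\cdot,0)=L_{u_0}g$ from the kinematic condition, then substitute the appropriate expression for $\partial_x v(\cdot,0)$ (namely $g$ for Problem~\ref{HW} and $g+c'$ from Newton's law for Problem~\ref{HWMass}) into the interface identity~\eqref{InterfaceEq}. The converse reconstruction you sketch is not spelled out in the paper's proof of this proposition (it is deferred, consistently with the word ``formally,'' to the proofs of the existence theorems), but it adds nothing incorrect.
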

\begin{proof}
First we notice that from kinematic coupling condition~\eqref{HeatWaveCoupling}$_1$ (or~\eqref{MassCoupling}$_1$)
and the definition of operator $L_{u_0}$ we have
$$
c=h'=\partial_t v(.,0)=u(.,0)=L_{u_0}(\partial_x u(.,0))=L_{u_0}g.
$$
Now~\eqref{HWequiv} follows directly from~\eqref{InterfaceEq} and the dynamic coupling condition~\eqref{HeatWaveCoupling}$_2$, i.e.  $g=\partial_x u(.,0)=\partial_x v(.,0)$. Similarly,
~\eqref{HWMassequiv} follows directly from~\eqref{InterfaceEq} and the dynamic coupling condition~\eqref{MassCoupling}$_2$, i.e.  $\partial_x v(.,0)=\partial_x u(.,0)+h''=g+c'$. 
\end{proof}
\begin{remark}
The extra term $(L_{u_o} g)'$ in~\eqref{HWMassequiv} comes from the inertia of the point mass located at the interface between the heat and the wave equation domains. 
\end{remark}
Using the Dirichlet to Neumann formulation of problems 1 and 2 we show next that for problem 2, which contains the interface with point mass, less regularity of the initial data is required to recover the same interface regularity as in problem 1, which has no point mass at the interface. More precisely, we have the following two Propositions. 
\begin{proposition}\label{existence}
Let $u_0\in H^{r_1}_0(-1,0)$, $r_1\geq 1$, and $(v_0,v_1)\in H^{r_2+1}_0(0,1)\times H^{r_2}_0(0,1)$, $r_2\geq 1/4$, and $T=1$. Then there exists a unique solution $g\in H^{1/4}(0,T)$ to problem~\eqref{HWequiv}. Furthermore, the following statements hold
\begin{enumerate}
\item Let $2r_2+1/2\leq r_1+1$, and $r_2\neq n+3/4$ and $r_2\neq n/2$, $n\in\N_0$. Then $\displaystyle{g\in H_{00}^{r_2}(0,T)}$.
\item Let $r_1+1\leq 2r_2+1/2$, and $r_1\neq 2n+2$ and $r_1\neq n+1/2$, $n\in\N_0$ . Then $\displaystyle{g\in H_{00}^{r_1/2+1/4}(0,T)}$.
\end{enumerate}
\end{proposition}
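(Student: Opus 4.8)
The plan is to solve the nonlocal equation~\eqref{HWequiv} by exploiting the splitting~\eqref{Lu0}, which recasts it as
\[
(I+L_0)g = v_0'+v_1 - L_{u_0}{\bf 0} =: F .
\]
First I would pin down the regularity of the right-hand side $F$. Since $T=1$, the datum $v_0'+v_1$ is read as a function on $(0,T)$; from $v_0\in H^{r_2+1}_0(0,1)$ and $v_1\in H^{r_2}_0(0,1)$ one checks that the time-derivatives at $t=0$ relevant to the $H_{00}$-condition vanish, so $v_0'+v_1\in H^{r_2}_{00}(0,T)$. By Proposition~\ref{LIso}(1) applied with $r=r_1$ one has $L_{u_0}{\bf 0}\in H^{r_1/2+1/4}(0,T)$, and because the heat trace with zero Neumann datum vanishes together with its relevant time-derivatives at $t=0$ (as $u_0\in H^{r_1}_0$ and $r_1\ge 1$), it lies in the corresponding $H_{00}$-space. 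Hence $F\in H^{\sigma}_{00}(0,T)$ with $\sigma:=\min(r_2,\,r_1/2+1/4)$.

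Next comes existence and uniqueness of $g\in H^{1/4}(0,T)$. Below order $1/2$ there is no trace constraint, so $H^{1/4}_{00}=H^{1/4}$ and $F\in H^{1/4}_{00}$. By Proposition~\ref{LIso}(2), $L_0$ maps $H^{1/4}_{00}$ boundedly into $H^{3/4}_{00}$; composing with the compact embedding $H^{3/4}_{00}\hookrightarrow H^{1/4}_{00}$ shows that $L_0$ is compact on $H^{1/4}_{00}$, so $I+L_0$ is Fredholm of index zero. Injectivity follows from positivity of $L_0$: writing $u=Sg$ for the heat solution with zero initial datum and Neumann datum $g$, multiplying $\partial_t u=\partial_x^2 u$ by $u$ and integrating over $(-1,0)\times(0,T)$ gives
\[
\int_0^T g\,L_0 g\,dt = \tfrac12\|u(T)\|_{L^2}^2 + \int_0^T\|\partial_x u\|_{L^2}^2\,dt \ge 0,
\]
so that $\langle g,(I+L_0)g\rangle_{L^2}\ge\|g\|_{L^2}^2$. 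Thus $I+L_0$ is injective, hence invertible on $H^{1/4}_{00}$, and $g=(I+L_0)^{-1}F$ is the unique solution.

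Finally, the sharper regularity in statements 1 and 2 comes from a bootstrap driven by the half-derivative smoothing of $L_0$. Rewriting the equation as $g=F-L_0 g$ and starting from $g\in H^{1/4}$, Proposition~\ref{LIso}(2) gives $L_0 g\in H^{3/4}_{00}$, whence $g\in H^{\min(\sigma,3/4)}_{00}$; iterating, at each step the regularity of $L_0 g$ exceeds that of $g$ by $1/2$, so after finitely many steps the regularity is capped by $F$ and $g\in H^{\sigma}_{00}(0,T)$. In the regime $2r_2+1/2\le r_1+1$ one has $\sigma=r_2$, giving statement~1, while in the regime $r_1+1\le 2r_2+1/2$ one has $\sigma=r_1/2+1/4$, giving statement~2. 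The main obstacle is the bookkeeping in this bootstrap: every application of Proposition~\ref{LIso} requires the intermediate and terminal Sobolev exponents to avoid the excluded values, which is precisely what the hypotheses $r_2\ne n/2,\,n+3/4$ (respectively $r_1\ne 2n+2,\,n+1/2$) secure, and one must verify that the zero-trace compatibility at $t=0$ propagates through the iteration so that the $H_{00}$-framed isomorphism of Proposition~\ref{LIso}(2) stays applicable at each level.
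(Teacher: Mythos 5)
Your proposal follows essentially the same route as the paper: the decomposition $(I+L_0)g=F$ via \eqref{Lu0}, compactness of $L_0$ from its half-derivative smoothing, the Fredholm alternative with the kernel killed by the heat-energy identity (your positivity bound $\langle g,L_0g\rangle\ge 0$ is the paper's computation rearranged), and the regularity of $g$ read off from the regularity of $F$ together with the mapping property $L_0:H^{s}_{00}\to H^{s+1/2}_{00}$ of Proposition~\ref{LIso}. The only real difference is that you upgrade regularity by iterating $g=F-L_0g$ in half-derivative steps, whereas the paper simply reruns the Fredholm argument directly in the target space $H^{r_2}_{00}$ (resp.\ $H^{r_1/2+1/4}_{00}$) --- which incidentally sidesteps the issue you flag of intermediate exponents such as $3/4$ landing on the excluded values $n/2$, $n+3/4$, a point your stated hypotheses do not by themselves secure.
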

\begin{proof}
Using equation~\eqref{Lu0}, we can rewrite~\eqref{HWequiv} in the following way:
\begin{equation}\label{TMP1}
(I+L_0)g=v_0'+v_1-L_{u_0}{\bf 0}.
\end{equation}
Notice that the right hand side of the above equation is a $H^{1/4}(0,1)$ function (see Proposition \ref{LIso}). Furthermore, we can view $L_0$ as an operator on $H^{1/4}(0,1)$, i.e. $L_0:H^{1/4}(0,1)\to H^{1/4}(0,1)$. Now, $L_0$ is a compact operator on $H^{1/4}(0,1)$ because of compactness of embedding $H^{3/4}(0,1) \hookrightarrow H^{1/4}(0,1)$. Therefore, we can use Fredholm alternative (see e.g. Theorem 6.6 in \cite{Brezis}) and to complete the proof of statement 1, it only remains to prove ker$(I+L_0)=\{\bf 0\}.$

Let us take $g\in{\rm ker}(I+L_0)$, i.e. $L_0g=-g$. By multiplying~\eqref{DefL1} by $Sg$, integrating on
$(0,t)\times(-1,0)$ and integrating by parts, we get the following equality for every $t\in (0,1]$:
$$
\frac 1 2\|Sg(t,.)\|_{L^2(-1,0)}^2=\frac 1 2\|Sg(0,.)\|_{L^2(-1,0)}^2-\|\partial_x (Sg)\|^2_{L^2((0,t)\times (-1,0))}-\|g\|_{L^2(0,t)}^2.
$$ 
Now from the initial condition $Sg(0,.)=0$, we conclude $Sg=0$ and therefore $g=Sg(.,0)=0$. This concludes the proof of the existence and the uniqueness part of the Proposition.

Let us now prove the regularity statements. Let us denote by $F$ the right-hand side of equation~\eqref{TMP1}. The condition on $r_1$ and $r_2$ in statement 1 can be rewritten as $r_2\leq r_1/2+1/4$. Therefore, by using statement 1 from Proposition \ref{LIso}, we get $F\in H^{r_2}_{00}(0,1)$. Now, by using the same reasoning as in the proof of the existence part, and the fact that $L_0:H_{00}^{r_2}(0,1)\to H_{00}^{r_2+1/2}(0,1)$ (see Proposition \ref{LIso}, statement 2), we conclude $g\in H_{00}^{r_2}(0,1)$.

The proof of statement 2 is analogous. Namely, in this case we also use Proposition \ref{LIso} to conclude $F\in H_{00}^{r_1/2+1/4}(0,1)$ and use the fact that $L_0:H_{00}^{r_1/2+1/4}(0,1)\to H_{00}^{r_1/2+3/4}(0,1)$.
\end{proof}
Notice that in the case $2r_2+1/2\leq r_1+1$ the regularity of $g$ is determined by $r_2$, i.e. by the regularity of the wave equation initial data, while in the other case the regularity of $g$ is determined by $r_1$, i.e. the heat equation initial data.

Now let us prove an analogous result for the case of coupling through point mass.

\begin{proposition}\label{existenceMass}
Let $u_0\in H^{r_1}_0(-1,0)$, $r_1\geq 1$, and $(v_0,v_1)\in H^{r_2+1}_0(0,1)\times H^{r_2}_0(0,1)$, $r_2\geq -1/4$. Then there exists a unique solution $g\in H^{1/4}(0,T)$ to problem~\eqref{HWMassequiv}. Furthermore, the following statements hold
\begin{enumerate}
\item Let $2r_2+3/2\leq r_1$, and $r_2\neq n+3/4$ and $r_2\neq n/2$, $n\in\N_0$. Then $\displaystyle{g\in H_{00}^{r_2+1/2}(0,T)}$ and $\displaystyle{L_{u_0}g\in H_{00}^{r_2+1}(0,T)}$.
\item Let $r_1\leq 2r_2+3/2\leq r_1+1$, and $r_1\neq 2n+2$ and $r_1\neq n+1/2$, $n\in\N_0$ . Then $\displaystyle{g\in H_{00}^{r_1/2-1/4}(0,T)}$ and $\displaystyle{L_{u_0}g\in H_{00}^{r_2+1}(0,T)}$.
\item Let $r_1+1\leq 2r_2+3/2$, and $r_1\neq 2n+2$ and $r_1\neq n+1/2$, $n\in\N_0$ . Then $\displaystyle{g\in H_{00}^{r_1/2-1/4}(0,T)}$ and $\displaystyle{L_{u_0}g\in H_{00}^{r_1+3/4}(0,T)}$.
\end{enumerate}
\end{proposition}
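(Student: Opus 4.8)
The plan is to collapse the coupled nonlocal system \eqref{HWMassequiv} to a single first-order evolution equation for the interface velocity $c:=L_{u_0}g$ and then to follow the three-step scheme of Proposition~\ref{existence} (Fredholm existence, energy uniqueness, regularity bootstrap), the only genuinely new ingredient being the inertial term $(L_{u_0}g)'$. Writing $G:=L_{u_0}\mathbf{0}$ and using the decomposition \eqref{Lu0} together with the fact that $L_0\colon H^s_{00}\to H^{s+1/2}_{00}$ is an isomorphism (Proposition~\ref{LIso}(2)), I substitute $g=L_0^{-1}(c-G)$ into \eqref{HWMassequiv}, which turns the system into
\begin{equation}\label{redMass}
c'+c+L_0^{-1}c=F+L_0^{-1}G=:\widetilde F,\qquad c(0)=0,\quad F:=v_0'+v_1.
\end{equation}
The decisive feature of \eqref{redMass} is that the principal part is the \emph{local} operator $\tfrac{d}{dt}+I$, while $L_0^{-1}$ is causal and loses only half a derivative; integrating \eqref{redMass} will therefore gain one full derivative on $c=L_{u_0}g$, and this is precisely the mechanism behind the announced half-derivative gain over Problem~\ref{HW}.

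For existence and uniqueness of $g\in H^{1/4}(0,T)$, equivalently $c\in H^{3/4}_{00}(0,T)$, I would first invert the principal part: applying the solution operator $J\psi(t)=\int_0^t e^{-(t-\tau)}\psi(\tau)\,d\tau$ of $c'+c=\psi$, $c(0)=0$, recasts \eqref{redMass} as $(I+JL_0^{-1})c=J\widetilde F$. Since $L_0^{-1}$ sends $H^{3/4}_{00}$ into $H^{1/4}_{00}$ and $J$ gains one derivative, the operator $JL_0^{-1}$ factors through the compact embedding $H^{5/4}\hookrightarrow H^{3/4}$ and is compact, so the Fredholm alternative reduces existence to triviality of the kernel. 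For the kernel I would repeat the energy computation of Proposition~\ref{existence} on the heat solution $Sg$, now retaining the extra boundary contribution from the inertia: a kernel element satisfies $g=\partial_x(Sg)(\cdot,0)=-c-c'$ against $(Sg)(\cdot,0)=c$, so multiplying $\partial_t(Sg)=\partial_x^2(Sg)$ by $Sg$ and integrating by parts gives, after using $cg=-c^2-\tfrac12(c^2)'$,
\begin{equation}\label{energyMass}
\tfrac12\|Sg(t,\cdot)\|_{L^2(-1,0)}^2+\tfrac12\,c(t)^2+\int_0^t\|\partial_x Sg\|_{L^2(-1,0)}^2+\int_0^t c^2=0,
\end{equation}
the new term $\tfrac12 c^2=\tfrac12|h'|^2$ being exactly the kinetic energy of the point mass. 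Every term in \eqref{energyMass} is nonnegative, hence $Sg\equiv0$ and $g\equiv0$, so the kernel is trivial and \eqref{redMass} has a unique solution.

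The regularity statements are then pure bookkeeping of two competitions. By Proposition~\ref{LIso}(1) one has $G\in H^{r_1/2+1/4}$, so $L_0^{-1}G\in H^{r_1/2-1/4}$, while $F\in H^{r_2}_{00}$; hence $\widetilde F\in H^{\min\{r_2,\,r_1/2-1/4\}}_{00}$, and integrating \eqref{redMass} once yields
\begin{equation}\label{cReg}
L_{u_0}g=c\in H^{1+\min\{r_2,\,r_1/2-1/4\}}_{00}(0,T).
\end{equation}
Finally $g=L_0^{-1}(c-G)$ loses half a derivative, so $g$ has regularity $\min\{1+\min\{r_2,r_1/2-1/4\},\,r_1/2+1/4\}-1/2$. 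The three hypotheses correspond exactly to the orderings $r_2\le r_1/2-3/4$, $r_1/2-3/4\le r_2\le r_1/2-1/4$ and $r_2\ge r_1/2-1/4$, which fix the dominant term in each minimum and produce the listed indices for $g$ and $L_{u_0}g$; the excluded values $r_1\neq 2n+2,n+1/2$ and $r_2\neq n+3/4,n/2$ are imposed precisely so that the mapping properties of Proposition~\ref{LIso} and the $H_{00}$ compatibility at $t=0$ survive each application of $L_0^{\pm1}$.

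I expect the existence step, not the bookkeeping, to be the main obstacle. Unlike \eqref{HWequiv}, equation \eqref{HWMassequiv} is not a compact perturbation of the identity because of the unbounded inertial term $(L_{u_0}g)'$, so one must first strip off the local principal part $\tfrac{d}{dt}+I$ before any compactness becomes available. One must also check that the reduction \eqref{redMass} is legitimate in the base space, i.e. that $c-G$ lies in the domain of $L_0^{-1}$: this needs the vanishing traces $c(0)=0$ (imposed) and $G(0)=u_0(0)=0$ (forced by $u_0\in H^{r_1}_0$), and more generally that the half-derivative accounting stays compatible with the $H_{00}$ conditions at $t=0$ at every step, which is where the real care is required.
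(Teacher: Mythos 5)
Your argument is correct and is essentially the paper's own proof written in the variable $c=L_{u_0}g$ rather than $g$: the paper first solves the ODE for $L_{u_0}g$ and arrives at $(I+W)g=F$ with $(Wg)(t)=L_0^{-1}\bigl(\int_0^t e^{s-t}g(s)\,ds\bigr)$, i.e.\ your two factors composed in the opposite order and acting on $g=L_0^{-1}(c-G)$ instead of $c$, and its kernel computation is the same energy identity (your point-mass kinetic energy $\tfrac12 c(t)^2$ is the paper's $\tfrac12 G(t)^2$), with the same Fredholm compactness argument and the same regularity bookkeeping afterwards. One small point: in case 3 your computation yields $L_{u_0}g\in H^{r_1/2+3/4}_{00}$ rather than the stated $H^{r_1+3/4}_{00}$; this is also what the paper's own proof gives (and what Theorem~\ref{existenceThm2} actually uses), so the exponent in the statement is a typo and your bookkeeping is the correct one.
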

\begin{proof}
We denote the right-hand side of~\eqref{HWMassequiv} by $f=v_0'+v_1$. We can formally solve Cauchy problem~\eqref{HWMassequiv} for unknown $L_{u_0}g$ and get 
\begin{equation}\label{FomulaLu0}
(L_{u_0}g)(t)=\int_0^te^{s-t}(f(s)-g(s))ds.
\end{equation}
By using~\eqref{Lu0} we get
$$
L_0g+\int_0^te^{s-t}g(s)ds=\int_0^te^{s-t}f(s)ds-L_{u_0}{\bf 0}.
$$
We apply $L_0^{-1}$ to obtain the following formulation, which is formally equivalent to~\eqref{HWMassequiv}:
\begin{equation}\label{existenceTmp1}
(I+W)g=L_0^{-1}\big ( \int_0^te^{s-t}f(s)ds-L_{u_0}{\bf 0}\big ),
\end{equation}
where $(Wg)(t)=L_0^{-1}\big (\int_0^te^{s-t}g(s)ds\big )$. We again use Fredholm alternative to prove the existence result. Namely, $W$ is a well defined operator on $H^{1/4}(0,1)$. Furthermore, it is compact since Im$(W)\subset H^{3/4-\varepsilon}(0,1)$, $\varepsilon>0$ (by integration we gain one derivative and by applying Dirichlet to Neumann operator $L_0^{-1}$ we lose half of a derivative, see Proposition \ref{LIso}, statement 2. Therefore it only remains to prove ker$(I+W)=\{\bf 0\}.$

Let $g\in{\rm ker}(I+W)$. Then
$$
(L_0g)(t)=-\int_0^te^{s-t}g(s)ds.
$$
Using a calculation analogous to the one in the proof of Proposition~\ref{existence} we get the following equality for every $t\in (0,1]$:
\begin{equation}\label{existenceTmp2}
\begin{array}{c}
\displaystyle{\frac 1 2\|Sg(t,.)\|_{L^2(-1,0)}^2=\frac 1 2\|Sg(0,.)\|_{L^2(-1,0)}^2-\|\partial_x (Sg)\|^2_{L^2((0,t)\times(-1,0))}}
\\
\displaystyle{-\int_0^tg(\tau)\int_0^\tau e^{s-t}g(s)dsd\tau.}
\end{array}
\end{equation}
To show that the right-hand side of~\eqref{existenceTmp2} is negative, we define $G(\tau)=\int_0^\tau e^{s-t}g(s)ds$. Straightforward calculation yields $g=G'+G$. Therefore, we have
$$
-\int_0^tg(\tau)\int_0^\tau e^{s-t}g(s)dsdt=-\int_0^t(G'+G)G=-\frac 1 2 G^2(t)-\int_0^tG^2\leq 0.
$$
Now, from~\eqref{existenceTmp2} we deduce $g=0$, i.e.  ker$(I+W)=\{\bf 0\}.$

Hence, we proved the existence of unique $g\in H^{1/4}(0,1)$ satisfying~\eqref{existenceTmp1}, where integral $\int_0^te^{s-t}f(s)$ is understood in a dual $H^{-1/4}(0,1)$ sense. By applying $L_0$ on~\eqref{existenceTmp1} and by differentiating the resulting equation, we show that $g$ is a unique solution to the problem~\eqref{HWMassequiv}, where equality~\eqref{HWMassequiv} is understood in the distributional sense on $(0,1)$. Uniqueness follows from the facts that $L_{u_0}g\in H^{3/4}(0,1)$ and $(L_{u_0}g)(0)=0$.

To prove the regularity part we first notice that from Proposition \ref{LIso}, statement 2, we can conclude that
$W:H_{00}^{s+1/2}(0,1)\to H_{00}^{s+1}(0,T)$ is an isomorphism,  $s\neq n+3/4$ and $s\neq n/2$, $n\in\N_0$. Let us denote the right-hand side of~\eqref{existenceTmp1} by $F$. Analogously to the proof of Proposition~\ref{existence}, the regularity of $g$ follows from the regularity of $F$. Term $L_0^{-1}\int_0^te^{s-t}f(s)ds$ belong to $H^{r_2+1/2}$ space, $r_2\neq n+3/4$ and $r_2\neq n/2$, $n\in\N_0$. Therefore, using Proposition~\ref{LIso} we conclude that
$$
F\in\left \{\begin{array}{c}H^{r_2+1/2}_{00}(0,1),\; r_2+1/2\leq r_1/2-1/4,
\\
H^{r_1/2-1/4}_{00}(0,1),\; r_2+1/2\geq r_1/2-1/4.\end{array} \right.
$$
This concludes the proof of the regularity of $g$. It only remains to prove the statements about the regularity of $L_{u_0}g$. However, $L_{u_0}g$ is given by formula~\eqref{FomulaLu0} and therefore the regularity of $L_{u_0}g$ follows directly from regularity of $f$ and $g$. 
\end{proof}
Similarly as in Proposition~\ref{existence}, the regularity of $g$ in the case covered by statement 1 is determined by $r_2$, while in the cases covered by statements 2 and 3 is determined by $r_1$. However, the regularity of $L_{u_0}g$ (which represents the interface velocity) is determined by $r_2$ in statements 1 and 2 and by $r_1$ in statement 3.

\begin{remark}
Notice that for the proofs of Proposition~\ref{existence} and~\ref{existenceMass} conditions $v_0(0)=0$ and $h(0)=0$ are not necessary and both Propositions are valid without these conditions. However, we opted to leave assumption $v_0\in H^{r_2+1}_0(0,1)$ in the statements of the Propositions for the notational simplicity.
\end{remark}

Let us conclude this section by proving the existence and the regularity theorem for original coupled Problems \ref{HW} and \ref{HWMass}. 
\begin{theorem}
Let $u_0\in H^{r_1}_0(-1,0)$, $r_1\geq 1$, $(v_0,v_1)\in H^{r_2+1}_0(0,1)\times H^{r_2}_0(0,1)$, $r_2\geq 1/4$, $T=1$, and let $g$ be a solution to problem~\eqref{HWequiv} given by Proposition~\ref{existence}. Furthermore, let $c=L_{u_0}g$, $u=Sg$, where $L_{u_0}$ and $S$ are defined by \eqref{ND} and \eqref{DefL1}, respectively; and $h(t)=\int_0^tc(t)dt$. Finally, let $v$ be a solution to the following initial boundary value problem for the wave equation:
\begin{equation}\label{Wave}
\begin{array}{c}
\partial_t^2v=\partial^2_x v,\quad {\rm in}\quad (0,T)\times (0,1),\\
v(t,0)=h(t),\; v(t,1)=0,\quad t\in (0,T),\\
v(0,x)=v_0(x),\; \partial_t v(0,x)=v_1(x),\quad x\in (0,1).
\end{array}
\end{equation}
Then $(u,v)$ is a unique solution to Problem 1. Furthermore, the following statements hold: 
\begin{enumerate}
\item Let $2r_2+1/2< r_1$, and $r_2\neq n+3/4$ and $r_2\neq n/2$, $n\in\N_0$. Then 
$$\displaystyle{(u,v)\in H^{r_2+3/4,2(r_2+3/4)}((0,T)\times(-1,0))\times V^{r_2+1}((0,T)\times (0,1))}.$$
\item Let $r_1\leq 2r_2+1/2\leq r_1+1$, and $r_2\neq n+3/4$ and $r_2\neq n/2$, $n\in\N_0$. Then 
$$\displaystyle{(u,v)\in H^{(r_1+1)/2,r_1+1}((0,T)\times(-1,0))\times V^{r_2+1}((0,T)\times (0,1))}.$$
\item Let $r_1+1\leq 2r_2+1/2\leq r_1+2$, and $r_1\neq 2n+2$ and $r_1\neq n+1/2$, $n\in\N_0$ . Then 
$$\displaystyle{(u,v)\in H^{(r_1+1)/2,r_1+1}((0,T)\times(-1,0))\times V^{r_2+1}((0,T)\times (0,1))}.$$
\item Let $r_1+2< 2r_2+1/2$, and $r_1\neq 2n+2$ and $r_1\neq n+1/2$, $n\in\N_0$ . Then 
$$\displaystyle{(u,v)\in H^{(r_1+1)/2,r_1+1}((0,T)\times(-1,0))\times V^{r_1/2+7/4}((0,T)\times (0,1))}.$$
\end{enumerate}
\label{existenceThm1}
\end{theorem}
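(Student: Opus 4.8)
The plan is to take the interface datum $g$ furnished by Proposition~\ref{existence}, build $(u,v)$ from it exactly as in the statement, verify that this triple solves Problem~\ref{HW}, and then read off the regularity of each component from that of $g$ using the parabolic theory for $u$ and d'Alembert's formula for $v$. The verification is short. By construction $u=Sg$ solves the heat equation with Neumann trace $g$ and initial datum $u_0$, and $v$ solves~\eqref{Wave} with Dirichlet datum $h$. The kinematic condition holds since $u(\cdot,0)=L_{u_0}g=c=h'=\partial_t v(\cdot,0)$; the dynamic condition follows by combining the matching identity~\eqref{InterfaceEq}, valid on $t=x$ for any wave solution with datum $h$, with~\eqref{HWequiv}: as $(I+L_{u_0})g=v_0'+v_1$ and $c=L_{u_0}g$, one obtains $\partial_x v(\cdot,0)=v_0'+v_1-c=g=\partial_x u(\cdot,0)$. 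Uniqueness is inherited from Proposition~\ref{equiv} together with the uniqueness of $g$ in Proposition~\ref{existence}, since $u$, $c$, $h$ and $v$ are each determined by $g$.

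For the heat component I would use linearity and Proposition~\ref{LIso} (i.e.\ Theorem~4.6.2 of~\cite{LionsMagenes2}): the initial datum $u_0\in H^{r_1}_0$ alone produces $u\in H^{(r_1+1)/2,\,r_1+1}$, while the Neumann datum $g\in H^{\sigma}_{00}$ alone produces $u\in H^{\sigma+3/4,\,2\sigma+3/2}$, and the full solution lies in the weaker of the two anisotropic spaces. Inserting $\sigma=r_2$ in cases~1--2 and $\sigma=r_1/2+1/4$ in cases~3--4 (the two alternatives of Proposition~\ref{existence}), a comparison of exponents shows the Neumann contribution is the bottleneck only when $r_2+3/4<(r_1+1)/2$, i.e.\ in case~1, giving $H^{r_2+3/4,\,2(r_2+3/4)}$; in cases~2--4 the initial datum caps the regularity at $H^{(r_1+1)/2,\,r_1+1}$. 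This reproduces the four stated heat spaces.

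For the wave component, region~$II$ contributes spatial regularity $H^{r_2+1}$, since~\eqref{DA2} writes $v$ there as a combination of $v_0\in H^{r_2+1}$ and a primitive of $v_1\in H^{r_2}$. The delicate part is region~$I$. Substituting $\partial_x v(\cdot,0)=g$ and $h'=c=v_0'+v_1-g$ into~\eqref{DA1} and cancelling the two $h(t+x)$ terms leaves the clean representation
\[
v(t,x)=h(t-x)+\tfrac12\int_{t-x}^{t+x}\bigl(v_0'+v_1\bigr)(s)\,ds,\qquad (t,x)\in I,
\]
whose integral term is controlled by $H^{r_2+1}$, so that the region~$I$ regularity is governed by the boundary trace $h=\int_0^{\cdot}L_{u_0}g$. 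Splitting $h$ through~\eqref{Lu0}, the part $\int L_0 g$ lies in $H^{\sigma+3/2}$, which is exactly the cap $H^{r_1/2+7/4}$ of case~4, whereas the part $\int L_{u_0}\mathbf 0$ stemming from the heat initial datum is only $H^{r_1/2+5/4}$ globally, though smooth for $t>0$ with its sole singularity on the characteristic $t=x$.

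I expect the sharp region~$I$ estimate to be the main obstacle: one must show that the rough, heat-initial-data part $\int L_{u_0}\mathbf 0$ of $h$ does not lower the spatial regularity of $v(t,\cdot)$ below $\min(r_2+1,\,r_1/2+7/4)$. This forces one to analyse $v$ across $t=x$, where the singularity carried by $h(t-x)$ meets the smooth region~$II$ solution, and to exploit the compatibility encoded in the $H_0$/$H_{00}$ hypotheses (the vanishing of $g$ and its derivatives at $t=0$). Once the region~$I$ exponent is fixed, the $V$-structure follows from the spatial regularity via $\partial_t^2 v=\partial_x^2 v$, and taking the minimum with $r_2+1$ over the four ranges yields $V^{r_2+1}$ in cases~1--3 and $V^{r_1/2+7/4}$ in case~4, completing the proof.
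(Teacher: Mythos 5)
Your verification that $(u,v)$ solves Problem~\ref{HW}, the uniqueness argument via Proposition~\ref{equiv}, and the treatment of the heat component by superposing the contribution of $u_0$ (giving $H^{(r_1+1)/2,r_1+1}$) and of the Neumann datum $g\in H^{\sigma}_{00}$ (giving $H^{\sigma+3/4,2\sigma+3/2}$) all match the paper's proof; the paper only adds that one must invoke the hidden regularity theorem ($\partial_x v(\cdot,0)\in L^2(0,T)$, Theorem~2.1 of \cite{LionsHiddenRegularity}) before the dynamic coupling condition and \eqref{InterfaceEq} can be read in the trace sense, a point you pass over. The divergence, and the gap, is in the wave component. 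The paper does not return to d'Alembert here: it computes the regularity of the Dirichlet datum $h$ from Proposition~\ref{LIso} and \eqref{Lu0} (e.g.\ $c=L_0g+L_{u_0}\mathbf{0}\in H^{r_2+1/2}_{00}$, hence $h\in H^{r_2+3/2}_{00}$ in case~1) and then cites the sharp regularity theory for the wave equation with nonhomogeneous Dirichlet boundary data (Theorem~2.5 and Remark~2.10 of \cite{LionsHiddenRegularity}) to conclude. You instead write $v(t,x)=h(t-x)+\tfrac12\int_{t-x}^{t+x}(v_0'+v_1)$ on region~$I$ and then stop precisely at the decisive step: you declare that ``the main obstacle'' is to show that the rough part $\int L_{u_0}\mathbf{0}$ of $h$ does not pull $v(t,\cdot)$ below $\min(r_2+1,\,r_1/2+7/4)$, and you give no argument for this. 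Since that estimate is the entire content of cases~3 and~4, the proof as written is incomplete.

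This is not a gap that can be waved away, and your own bookkeeping shows why: in cases~3 and~4 the full Dirichlet datum satisfies only $h\in H^{r_1/2+5/4}_{00}$ (it is capped by the trace $L_{u_0}\mathbf{0}$ of the purely-initial-data heat solution, and the trace exponent $r_1/2+1/4$ in Proposition~\ref{LIso} is sharp), whereas the asserted wave regularity reaches $r_1/2+7/4$, half a derivative more than the boundary datum supports under either your d'Alembert route or the Lions boundary-regularity theorem. The $H_{00}$/$H_0$ compatibility only forces finitely many one-sided derivatives of $h(t-\cdot)$ and of the region-$II$ solution to agree across the characteristic $x=t$; it does not raise the Sobolev regularity of the one-sided piece $h(t-x)$ itself. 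So you must either exhibit a genuine cancellation between the reflected term $h(t-x)$ and the incoming data across $x=t$, or the region-$I$ exponent is $\min(r_2+1,\,r_1/2+5/4)$ and the statement needs adjusting. Supplying that analysis --- which the paper's proof compresses into ``statements 2--4 follow analogously'' after treating only case~1, where $h$ is more regular than needed and the issue does not arise --- is the missing piece of your argument.
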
 
\begin{proof}
The proof follows from Propositions \ref{equiv} and \ref{existence}, and the regularity results for the heat an the wave equation. Let us first prove that $(u,v)$ is a unique solution to Problem 1. Namely, from $g\in H^{1/4}(0,T)$ we get $u\in H^{1,2}((0,T)\times (-1,0))$ and $v\in V^{5/4}((0,T)\times (0,1))$. With stated regularity we can rigorously justify all the steps that lead to the formal equivalence of Problem \ref{HW} and~\eqref{HWequiv}. Moreover, coupling conditions on the interface \eqref{HeatWaveCoupling} for Problem \ref{HWMass} are satisfied in the trace sense, where one has to use the so-called hidden regularity theorem for the wave equation to justify trace $\partial_x v(.,0)$. More precisely, we have $\partial_x v(.,0)\in L^2(0,1)$ (see e.g. \cite{LionsHiddenRegularity}, Theorem 2.1.). 

To prove statements 1-4, we use Theorem 4.6.2. from \cite{LionsMagenes2} for the regularity of the heat equation, and  Theorem 2.5. and Remark 2.10. from \cite{LionsHiddenRegularity} for the regularity for the wave equation. Let us prove statement 1.

From Proposition \ref{existence} we have $g\in H^{r_2}_{00}(0,T)$. Moreover, from Proposition \ref{LIso} and~\eqref{Lu0} we have $c=L_{u_0}g\in H^{r_2+1/2}_{00}(0,T)$ and consequently $h\in H^{r_2+3/2}_{00}(0,T)$. Now, the statement follows from the regularity results for the heat and weave equations.

Statements 2-4 follows analogously by using Propositions \ref{LIso} and \ref{existence} together with the regularity theorems for the heat and the wave equations.
\end{proof}
In the case covered by statement 1 ($2r_2+1/2<r_1$) the regularity of heat component $u$ is not optimal w.r.t. the initial data. Namely, the full parabolic regularity with initial data $u_0\in H^{r_1}$ would yield $u\in H^{(r_1+1)/2,r_1+1}$. The reason for this loss of regularity is the influence of the less regular wave initial data via the coupling conditions. On the other hand, the wave component has optimal regularity, i.e. $v\in V^{r_2+1}$.

In the cases covered by statements 2 and 3 ($r_1\leq 2r_2+1/2\leq r_1+2$) both components, the wave and the heat, have optimal regularity w.r.t. the initial data. 

Finally, in the case covered by statement 4 ($r_1+2<2r_2+1/2$) the heat component has optimal regularity, while the wave component does not. Again, the reason for the loss of regularity is the influence of less regular heat initial data via the coupling conditions.

\begin{theorem}
Let $u_0\in H^{r_1}_0(-1,0)$, $r_1\geq 1$, $(v_0,v_1)\in H^{r_2+1}_0(0,1)\times H^{r_2}_0(0,1)$, $r_2\geq 0$, $T=1$, and let $g$ be a solution to problem~\eqref{HWMassequiv} given by Proposition~\ref{existenceMass}. Furthermore, let $c=L_{u_0}g$, $u=Sg$, where $L_{u_0}$ and $S$ are defined by \eqref{ND} and \eqref{DefL1}, respectively; and $h(t)=\int_0^tc(t)dt$. Finally, let $v$ be a solution to problem~\eqref{Wave}.

Then $(u,v,h)$ is a unique solution to Problem 2. Furthermore, the following statements hold: 
\begin{enumerate}
\item Let $2r_2+3/2< r_1$, and $r_2\neq n+3/4$ and $r_2\neq n/2$, $n\in\N_0$. Then 
$$\displaystyle{(u,v,h)\in H^{r_2+5/4,2(r_2+5/4)}((0,T)\times(-1,0))\times V^{r_2+1}((0,T)\times (0,1))}\times H^{r_2+2}_{00}(0,T).$$
\item Let $r_1\leq 2r_2+3/2\leq r_1+1$, and $r_2\neq n+3/4$ and $r_2\neq n/2$, $n\in\N_0$. Then 
$$\displaystyle{(u,v,h)\in H^{(r_1+1)/2,r_1+1}((0,T)\times(-1,0))\times V^{r_2+1}((0,T)\times (0,1))\times H^{r_2+2}_{00}(0,T)}.$$
\item Let $r_1+1\leq 2r_2+3/2\leq r_1+3$, and $r_1\neq 2n+2$ and $r_1\neq n+1/2$, $n\in\N_0$. Then 
$$\displaystyle{(u,v,h)\in H^{(r_1+1)/2,r_1+1}((0,T)\times(-1,0))\times V^{r_2+1}((0,T)\times (0,1))\times H^{r_1/2+7/4}_{00}(0,T)}.$$
\item Let $r_1+3< 2r_2+3/2$, and $r_1\neq 2n+2$ and $r_1\neq n+1/2$, $n\in\N_0$ . Then 
$$\displaystyle{(u,v,h)\in H^{(r_1+1)/2,r_1+1}((0,T)\times(-1,0))\times V^{r_1/2+7/4}((0,T)\times (0,1))\times H^{r_1/2+7/4}_{00}(0,T)}.$$
\end{enumerate}
\label{existenceThm2}
\end{theorem}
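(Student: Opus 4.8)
The plan is to mirror the proof of Theorem~\ref{existenceThm1}, replacing Proposition~\ref{existence} by Proposition~\ref{existenceMass} and carrying the extra unknown $h$ along throughout. First I would fix the candidate solution: let $g$ be the solution of the nonlocal problem~\eqref{HWMassequiv} supplied by Proposition~\ref{existenceMass}, set $c=L_{u_0}g$, $u=Sg$, $h(t)=\int_0^t c$, and let $v$ solve the wave problem~\eqref{Wave}. The first task is to check that $(u,v,h)$ is the \emph{unique} solution of Problem~\ref{HWMass}. Starting from the baseline regularity $g\in H^{1/4}(0,T)$, the heat theory gives $u=Sg\in H^{1,2}((0,T)\times(-1,0))$, while the bound $L_{u_0}g\in H^{3/4}(0,T)$ (established in the proof of Proposition~\ref{existenceMass}) yields $h\in H^{7/4}_{00}(0,T)$ and, through~\eqref{Wave}, enough regularity of $v$ that the trace $\partial_x v(\cdot,0)\in L^2(0,T)$ is justified by the hidden regularity theorem for the wave equation. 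With this in hand I can run the equivalence of Proposition~\ref{equiv} backwards: the kinematic conditions~\eqref{MassCoupling}$_1$ hold in the trace sense, and Newton's law~\eqref{MassCoupling}$_2$ is recovered from~\eqref{HWMassequiv} (interpreted distributionally for the least regular data). Uniqueness follows exactly as in Proposition~\ref{existenceMass}, from the normalization $(L_{u_0}g)(0)=0$ together with $L_{u_0}g\in H^{3/4}(0,T)$.

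For the regularity statements the key point is that the three components are governed by three independent transfer rules, all of which are already available. Writing $\sigma$ for the regularity index of $g$: (i) since $h=\int_0^t c$ gains one derivative, the regularity of $h$ is that of $L_{u_0}g$ plus one, and the regularity of $L_{u_0}g$ is read off formula~\eqref{FomulaLu0} as $\min\{r_2,\sigma\}+1$; (ii) the parabolic regularity theorem (Theorem 4.6.2 of Lions--Magenes) transfers the Neumann datum $g\in H^\sigma_{00}$ to $u=Sg$ in the parabolic scale of index $\sigma+3/4$, capped by the initial-data index $(r_1+1)/2$ coming from $u_0\in H^{r_1}_0$, so that $u\in H^{\min\{\sigma+3/4,(r_1+1)/2\},\,2\min\{\sigma+3/4,(r_1+1)/2\}}$; (iii) the wave regularity theorem (Theorem 2.5 and Remark 2.10 of the cited hidden-regularity reference) passes the Dirichlet datum $h\in H^\tau$ directly to $v\in V^{\tau}$, capped by the initial-data index $r_2+1$, so $v\in V^{\min\{\tau,r_2+1\}}$.

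Feeding the output of Proposition~\ref{existenceMass} into these three rules produces the four cases. The first transition, at $2r_2+3/2=r_1$, is where the regularity of $g$ stops being controlled by $r_2$ and becomes controlled by $r_1$; this separates statements~1 and~2 and reflects the $\sigma+3/4$ versus $(r_1+1)/2$ competition in $u$. The second transition, at $2r_2+3/2=r_1+1$, is where $\min\{r_2,\sigma\}$ in rule~(i) switches, so the regularity of $L_{u_0}g$ (hence of $h$) changes from $r_2+1$ to $r_1/2+3/4$; this produces statement~3, in which $h$ first carries the extra half-derivative $H^{r_1/2+7/4}_{00}$. The third and genuinely new transition, at $2r_2+3/2=r_1+3$, is where the wave cap flips in rule~(iii): up to that point $r_2+1\le r_1/2+7/4$ so $v\in V^{r_2+1}$ keeps its initial-data regularity, while beyond it the boundary datum $h$ becomes the binding constraint and $v\in V^{r_1/2+7/4}$, giving statement~4. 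In each regime one simply records which of the three indices is active.

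I expect the main obstacle to be the exponent bookkeeping rather than any deep new estimate: one must correctly identify, in each of the four regimes, which of the three constraints binds, and apply the correct trace losses ($3/4$ for the Neumann trace into the parabolic scale, the direct transfer for the wave Dirichlet datum). The subtle spot is statement~4, where the point-mass gain of half a derivative lets the boundary datum overtake the interior regularity of $v$; here the sharp index requires noting that $\partial_x v(\cdot,0)$ is best estimated through the interface identity $\partial_x v(\cdot,0)=v_0'+v_1-c$ rather than through $g+c'$, because of cancellation in the latter. Finally, the presence of $h''$ in the coupling~\eqref{MassCoupling}$_2$ — absent in Problem~\ref{HW} — is what forces two genuine derivatives onto $h$ and is precisely the mechanism producing the extra $H^{r_2+2}_{00}$/$H^{r_1/2+7/4}_{00}$ regularity of the interface displacement; I would make sure the distributional identity~\eqref{HWMassequiv} upgrades to the claimed $H^s_{00}$ spaces in each case before concluding.
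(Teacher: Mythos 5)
Your proposal is correct and follows essentially the same route as the paper: the paper's own proof simply declares the argument "analogous to Theorem~\ref{existenceThm1}", substituting Proposition~\ref{existenceMass} for Proposition~\ref{existence}, using the extra regularity of $L_{u_0}g$ and $h(t)=\int_0^t(L_{u_0}g)(s)\,ds$ — exactly the three transfer rules you spell out. Your version is in fact more explicit than the paper's about which constraint binds in each of the four regimes, and your exponent bookkeeping checks out against the stated conclusions.
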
 
\begin{proof}
The proof is analogous to the proof of Theorem~\ref{existenceThm1}. Therefore we omit the proof and just emphasize the points where the proofs differ. The main difference is that we use Proposition~\ref{existenceMass}, instead of Proposition~\ref{existence}, for the existence and the regularity of $g$. Furthermore, Proposition~\ref{existenceMass} gives us the additional regularity of $L_{u_0}g$ which is then used in the proof. Finally, we have $h(t)=\int_0^t(L_{u_0}g)(s)ds$.
\end{proof}

Similarly as in the discussion after Theorem~\ref{existenceThm1}, one can see that in the case of statement 1 the heat component does not have optimal regularity, while the wave component does have optimal regularity. In the case of statements 2 and 3 both components have optimal regularity, while in the case of statement 4 only the wave component has optimal regularity.

\begin{remark}\label{Trace}
Notice that the minimal regularity assumptions for the initial data $(v_0,v_1)$ are higher in Theorem~\ref{existenceThm2} ($r_2\geq 0$) than in Proposition~\ref{existenceMass} ($r_2\geq -1/4$). This is due to the fact that one needs certain regularity for the solution of the wave equation to make sense of the traces needed in the coupling conditions. In formulation~\eqref{HWMassequiv} the coupling conditions are ``encoded'' in operator $L_0$ and are implicit, so one can define lower regularity solutions.
\end{remark}
\begin{remark}
Let us fix parameter $r_1$ from Theorems~\ref{existenceThm1} and \ref{existenceThm2}. In the discussions after the Theorems we concluded that both components, the heat and the wave, have optimal regularity in the cases covered in statements 2 and 3. Therefore in the case of the plain heat wave coupling there is no loss of the regularity in neither component if $r_2\in[r_1/2-1/4,r_1/2+3/4]$ (Theorem~\ref{existenceThm1}). On the other hand, in the case of the coupling through point mass there is no loss of the regularity in neither component if $r_2\in[r_1/2-3/4,r_1/2+3/4]$ (Theorem~\ref{existenceThm2}). Notice that the interval is larger in the case of the coupling through point mass, which is due to the regularization effect of the point mass coupling.
\end{remark}

\section{Optimal regularity}
In this section we answer the question posed in the Introduction and give the optimal regularity result for Problems \ref{HW} and \ref{HWMass} which is a direct consequence of Theorems~\ref{existenceThm1} and~\ref{existenceThm2}. We start by the optimal regularity result for Problems~\eqref{HWequiv} and~\eqref{HWMassequiv}.
\begin{corollary}\label{existenceReg}
Let $u_0\in H_0^{2s+1/2}(-1,0)$, $s\geq 1/4$, ${s\neq n/2,\; s\neq n+\frac 3 4}$, $n\in\N_0$, and $T=1$.
\begin{enumerate}
\item Let $(v_0,v_1)\in H_0^{s+1}(0,1)\times H^{s}_{0}(0,1)$. Then there exists a unique solution $g\in H_{00}^{s}(0,T)$ to problem~\eqref{HWequiv}.
\item Let $(v_0,v_1)\in H_0^{s+1/2}(0,1)\times H_{0}^{s-1/2}(0,1)$. Then there exists a unique solution $g\in H_{00}^{s}(0,T)$ to problem~\eqref{HWMassequiv}.
\end{enumerate}
\end{corollary}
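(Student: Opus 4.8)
The plan is to recognize this corollary as nothing more than a specialization of Propositions~\ref{existence} and~\ref{existenceMass} to a one-parameter family of data, obtained by choosing $r_1$ and $r_2$ so that the resulting regularity index equals $s$ and so that the various exclusion sets collapse to exactly $\{n/2\}\cup\{n+3/4\}$. Both parts assert the same conclusion $g\in H^s_{00}(0,T)$, so the entire argument is bookkeeping: verify the standing parameter constraints, identify which region/statement of the relevant Proposition applies, and read off the output Sobolev index.

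For part~1 I would set $r_1=2s+1/2$ and $r_2=s$ in Proposition~\ref{existence}. First I would check the standing hypotheses: $r_1\ge 1$ and $r_2\ge 1/4$ both reduce to $s\ge 1/4$, which is assumed. The region condition of statement~1, namely $2r_2+1/2\le r_1+1$, becomes $2s+1/2\le 2s+3/2$, a tautology, so statement~1 is in force (strictly, so no boundary issue arises). The exclusion conditions $r_2\ne n+3/4$ and $r_2\ne n/2$ become precisely $s\ne n+3/4$ and $s\ne n/2$, exactly the hypotheses of the corollary. Statement~1 of Proposition~\ref{existence} then yields existence and uniqueness of $g\in H^{1/4}(0,T)$ together with $g\in H^{r_2}_{00}(0,T)=H^s_{00}(0,T)$, as claimed.

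For part~2 I would set $r_1=2s+1/2$ and $r_2=s-1/2$ in Proposition~\ref{existenceMass}; here $r_2\ge -1/4$ again reduces to $s\ge 1/4$. The key computation is $2r_2+3/2=2s+1/2=r_1$, so the data sit exactly on the common boundary of the regions of statements~1 and~2. This is the one genuine subtlety and the only place where care is needed: although statement~1 is nominally applicable and gives $g\in H^{r_2+1/2}_{00}=H^s_{00}$, its exclusion set $r_2\ne n+3/4,\,n/2$ would translate into the extra constraint $s\ne m+1/4$ ($m\ge 1$), which is not among the corollary's hypotheses. I would therefore invoke statement~2 instead: its region condition $r_1\le 2r_2+3/2\le r_1+1$ holds (the left inequality with equality), its output $g\in H^{r_1/2-1/4}_{00}=H^{s}_{00}$ is the desired one, and---crucially---its exclusion conditions $r_1\ne 2n+2$ and $r_1\ne n+1/2$ translate, via $r_1=2s+1/2$, into exactly $s\ne n+3/4$ and $s\ne n/2$. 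These coincide with the corollary's hypotheses, so no spurious exclusions are introduced, and existence and uniqueness of $g\in H^{1/4}(0,T)$ come directly from Proposition~\ref{existenceMass}.

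The proof is thus short, and I expect the only step requiring attention to be the boundary case in part~2 together with the choice of statement~2 over statement~1: it is precisely this choice that makes the exclusion set minimal and renders the hypotheses symmetric across the two parts. Everything else amounts to checking that $s\ge 1/4$ supplies all standing parameter constraints and that both parameter substitutions return the regularity index $s$.
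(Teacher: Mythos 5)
Your proposal is correct and follows exactly the route the paper intends: the paper's proof is literally ``direct consequence of Propositions~\ref{existence} and~\ref{existenceMass},'' and your substitutions $r_1=2s+1/2$, $r_2=s$ (resp.\ $r_2=s-1/2$) are the right ones. Your observation that in part~2 the data lie on the boundary $2r_2+3/2=r_1$ and that one must invoke statement~2 of Proposition~\ref{existenceMass} (whose exclusions $r_1\neq 2n+2$, $r_1\neq n+1/2$ translate precisely to $s\neq n+3/4$, $s\neq n/2$) rather than statement~1 is a correct and worthwhile detail that the paper leaves implicit.
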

\begin{proof}
This corollary is a direct consequence of Propositions \ref{existence} and \ref{existenceMass}.
\end{proof}
\begin{theorem}
Let $u_0\in H_0^{2s+1/2}(-1,0)$, $s\geq 1/4$, ${s\neq n/2,\; s\neq n+\frac 3 4}$, $n\in\N$ and let $g\in H_{00}^{s}(0,T)$ be given by Corollary~\ref{existenceReg}, and let $T=1$. Furthermore, let $c=L_{u_0}g$, $u=Sg$, where $L_{u_0}$ and $S$ are defined by \eqref{ND} and \eqref{DefL1}, respectively; and $h(t)=\int_0^tc(t)dt$. Finally, let $v$ be a solution to the initial boundary value problem~\eqref{Wave}.
Then the following statements hold:
\begin{enumerate}
\item If $(v_0,v_1)\in H_0^{s+1}(0,1)\times H_{0}^{s}(0,1)$, then 
$$\displaystyle{(u,v)\in H^{s+3/4,2s+3/2}((0,T)\times (-1,0))\times V^{s+1}((0,T)\times(0,1))}
$$ is a unique solution to Problem \ref{HW},
\item If $(v_0,v_1)\in H_0^{r+1/2}(0,1)\times H_{0}^{r-1/2}(0,1)$, then 
$$
(u,v,h)\in H^{r+3/4,2r+3/2}((0,T)\times (-1,0))\times V^{r+1/2}((0,T)\times(0,1))\times H^{r+3/2} (0,T)
$$ is a unique solution to Problem \ref{HWMass}, where $r=\max\{s,1/2\}$.
\end{enumerate}
\label{OptimalThm}
\end{theorem}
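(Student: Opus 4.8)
The plan is to read both statements off Theorems~\ref{existenceThm1} and~\ref{existenceThm2} by a careful identification of parameters, since Corollary~\ref{existenceReg} already produces the interface datum $g\in H^s_{00}(0,T)$ and the reconstruction of $(u,v,h)$ from $g$ here is verbatim the one used there. The first move is to fix the heat parameter as $r_1=2s+1/2$, which is exactly the hypothesis $u_0\in H^{2s+1/2}_0(-1,0)$; the assumption $s\geq 1/4$ then guarantees $r_1\geq 1$, so the Propositions and Theorems apply, and one should check that the excluded values $s\neq n/2,\,s\neq n+3/4$ reproduce the non-exceptionality conditions on $r_1,r_2$ demanded in the statement of the relevant theorem.

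For statement 1, I would take $r_2=s$, matching $(v_0,v_1)\in H^{r_2+1}_0\times H^{r_2}_0=H^{s+1}_0\times H^s_0$. The key arithmetic observation is that this choice gives $2r_2+1/2=2s+1/2=r_1$, so the data lie exactly on the common boundary of the regimes of Theorem~\ref{existenceThm1} and fall into statement~2, the window in which both components are simultaneously optimal. That statement then returns $(u,v)\in H^{(r_1+1)/2,\,r_1+1}\times V^{r_2+1}=H^{s+3/4,\,2s+3/2}\times V^{s+1}$, which is the claim, and uniqueness is inherited directly from Theorem~\ref{existenceThm1}.

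For statement 2 the only structural subtlety is the appearance of $r=\max\{s,1/2\}$. The natural identification $(v_0,v_1)\in H^{r+1/2}_0\times H^{r-1/2}_0$ corresponds to $r_2=r-1/2$, and Theorem~\ref{existenceThm2} requires $r_2\geq 0$ (Remark~\ref{Trace}: one needs enough wave regularity to define the interface traces), whereas $r_2=s-1/2$ would be negative when $s<1/2$; this is precisely what forces the floor at $1/2$. I would therefore split into two cases. When $s\geq 1/2$ I take $r=s$, $r_2=s-1/2$, so that $2r_2+3/2=2s+1/2=r_1$ again sits on the boundary, and statement~2 of Theorem~\ref{existenceThm2} yields $(u,v,h)\in H^{(r_1+1)/2,\,r_1+1}\times V^{r_2+1}\times H^{r_2+2}_{00}=H^{r+3/4,\,2r+3/2}\times V^{r+1/2}\times H^{r+3/2}$. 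When $1/4\leq s<1/2$ I take $r=1/2$, $r_2=0$; then $r_1=2s+1/2\in[1,3/2)$ and $2r_2+3/2=3/2$ satisfy $r_1\leq 2r_2+3/2\leq r_1+1$, so statement~2 applies once more. Here one checks via Proposition~\ref{existenceMass} that $g$ still lands in $H^s_{00}$ (its regularity being capped by $r_1$ through $r_1/2-1/4=s$), while the wave and point-mass components come out at the advertised levels $V^{r+1/2}=V^{1}$ and $H^{r+3/2}=H^{2}$, and the heat component attains the full parabolic regularity $H^{(r_1+1)/2,\,r_1+1}$ dictated by $u_0$ (which coincides with $H^{r+3/4,\,2r+3/2}$ in the case $r=s$).

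The main obstacle is entirely the index bookkeeping rather than any new idea: one must verify that every parameter choice lands inside, or exactly on the boundary of, the ``statement~2'' window of the relevant theorem, track the $H^{a,2a}$ trace shifts consistently (the $1/2$-derivative gain of $L_0$ from Proposition~\ref{LIso} and the $3/4$ shift between a parabolic Neumann trace and the interior regularity), and confirm that the excluded set $s\neq n/2,\,s\neq n+3/4$ is compatible with the non-exceptionality hypotheses inherited through the substitutions $r_1=2s+1/2$ and $r_2=r-1/2$. The genuinely conceptual point, as opposed to arithmetic, is the case split enforced by $r=\max\{s,1/2\}$, which encodes the minimal wave regularity required to make the interface traces meaningful for the point-mass problem.
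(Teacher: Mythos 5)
Your proposal matches the paper's own proof, which is precisely the two-line reduction to Theorems~\ref{existenceThm1} and~\ref{existenceThm2} via the substitutions $r_1=2s+1/2$, $r_2=s$ (resp.\ $r_2=r-1/2$), with $r=\max\{s,1/2\}$ justified exactly as you say by the trace requirement of Remark~\ref{Trace}; your version merely spells out the index bookkeeping the paper leaves implicit. Your parenthetical remark that the heat space $H^{(r_1+1)/2,\,r_1+1}$ coincides with the advertised $H^{r+3/4,\,2r+3/2}$ only when $r=s$ is a fair observation, but this is an imprecision in the theorem's statement for $1/4\le s<1/2$ rather than a gap in the argument.
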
 
\begin{proof}
This Theorem is a direct consequence of Theorems \ref{existenceThm1} and \ref{existenceThm2}. Parameter $r$ is introduced because $r\geq 1/2$ ensures that trace $\partial_x v(.,0)$ in coupling condition~\eqref{MassCoupling} is well-defined (see Remark \ref{Trace}). 
\end{proof}

\begin{remark}
The displacement of the interface is regularized because of the parabolic regularity. Namely, we have $(h,c)\in H^{s+3/2}(0,T)\times H^{s+1/2}(0,T)$, which is a gain of $1/2$ derivative w.r.t. the wave displacement and velocity $(v,\partial_t v)$. However, the wave that is reflected from the interface is not regularized due to the low regularity of $g=\partial_x u(.,0)\in H^{s}(0,T)$ in the case of Problem~\ref{HW}, or due to the low regularity of $h''\in H^{s-1/2}$ in the case of Problem~\ref{HWMass}.
\end{remark}
\begin{remark}
In the case of the plain heat wave coupling we have that the interface displacement is a $H^{s+3/2}$  function (see preceding remark) with the wave initial data $(v_0,v_1)\in H^{s+1}(0,1)\times H^{s}(0,1)$. On the other hand, in the case of coupling through point mass, we have $h\in H^{s+3/2}(0,T)$ with the initial data $(v_0,v_1)\in H^{s+1/2}(0,1)\times H^{s-1/2}(0,1)$, $s\geq 1/2$. Therefore, in the case of coupling through point mass we obtained that the interface displacement has the same regularity as the plain coupling case, but with the initial data which are less regular by a degree of $1/2$ of a derivative. Therefore, we see that the coupling through point mass regularizes the system by gaining $1/2$ of a derivative.
\end{remark}

\begin{remark}{\bf On a global in time solution}\\
Using the same techniques as in Theorems \ref{existenceThm1} and \ref{existenceThm2} one could prove the existence of a global in time solution by restarting the proof from time $t=1$ and reiterating the procedure. This would yield a global in time solution since the length of time interval on every step would be $1$. However, in order to complete the proof, one would have to work with general compatibility conditions and a non-zero
right-hand side. Since in this note we are primarily interested in optimal regularity and regularizing effects of a point mass coupling, we skip the details for technical simplicity.
\label{Global}
\end{remark}

\section{Conclusions}
In this note we study the heat-wave systems of equations which are coupled at the interface between the two respective domains in two different ways: with and without point mass at the interface. These systems can be viewed as simplified FSI models. We prove an optimal regularity theorem for both systems. The regularity theorem is optimal in the sense that we have minimal regularity assumptions for the wave initial data for which we can use the full parabolic regularity for the heat component of the solution. A further increase in the regularity of the wave initial data would not yield an increase in the regularity of the heat component of the solution. We were also interested in the regularity of the interface and the regularizing effects of a point mass coupling. Our analysis revealed the following properties of the considered coupled systems:
\begin{enumerate}
\item Even in the case of the ``plain'' heat-wave coupling, namely the case without point mass, the interface displacement is regularized by a degree of $1/2$ of the derivative w.r.t. to the displacement of the wave component of the solution. This effect is a consequence of the parabolic regularity of the heat component. However, the wave which is reflected from the interface has the same regularity as the incoming wave, so there is no regularization in the wave component.
\item The point mass coupling regularizes the problem by $1/2$ of the derivative in a sense that the wave initial data needed for the optimal regularity result have $1/2$ derivative less (in the sense of Sobolev spaces) than in the ``plain'' heat wave coupling. Moreover, the interface displacement is now regularized by a degree of one derivative w.r.t. to the displacement of the wave component of the solution. However, there is still no regularization in the wave component of the solution.
\end{enumerate}
\noindent
{\bf Acknowledgments} The author would like to thank Prof. Enrique Zuazua for pointing out the references for the heat-wave system and inspiring discussions during the author's visit to BCAM. The author would also like to thank Prof. Sun\v cica \v Cani\' c for carefully reading the manuscript and for her insightful comments about the manuscript. Further thanks are extended to Prof. Zvonimir Tutek and Prof. Igor Vel\v ci\' c for their comments about the manuscript. Furthermore, the author acknowledges support by the following grants: ESF OPTPDE - Exchange Grant 4171, the National Science Foundation grant DMS-1311709, MZOS grant number 0037-0693014-2765 and Croatian Science Foundation grant number 9477.
\bibliographystyle{plain}
\bibliography{c:/Users/Boris/Dropbox/myrefs}

\begin{thebibliography}{10}

\bibitem{AvalosLasieckaTriggiani}
George Avalos, Irena Lasiecka, and Roberto Triggiani.
\newblock Higher regularity of a coupled parabolic-hyperbolic fluid-structure
  interactive system.
\newblock {\em Georgian Math. J.}, 15(3):403--437, 2008.

\bibitem{AvalosTriggiani2}
George Avalos and Roberto Triggiani.
\newblock Rational decay rates for a {PDE} heat-structure interaction: a
  frequency domain approach.
\newblock {\em Evol. Equ. Control Theory}, 2(2):233--253, 2013.

\bibitem{BarGruLasTuff}
Viorel Barbu, Zoran Gruji{\'c}, Irena Lasiecka, and Amjad Tuffaha.
\newblock Smoothness of weak solutions to a nonlinear fluid-structure
  interaction model.
\newblock {\em Indiana Univ. Math. J.}, 57(3):1173--1207, 2008.

\bibitem{FSIforBio}
Tomas Bodnar, Giovanni~P. Galdi, and Sarka Necasova, editors.
\newblock {\em Fluid-Structure Interaction and Biomedical Applications}.
\newblock Birkh{\"a}user/Springer, Basel, 2014.

\bibitem{Brezis}
Ha{\"{\i}}m Brezis.
\newblock {\em Analyse fonctionnelle}.
\newblock Collection Math\'ematiques Appliqu\'ees pour la Ma\^\i trise.
  [Collection of Applied Mathematics for the Master's Degree]. Masson, Paris,
  1983.
\newblock Th{\'e}orie et applications. [Theory and applications].

\bibitem{multi-layered}
M.~Bukac, S.~Canic, and B.~Muha.
\newblock A partitioned scheme for fluid-composite structure interaction
  problems.
\newblock submitted, 2013.

\bibitem{AddedMassCNG}
P.~Causin, J.~F. Gerbeau, and F.~Nobile.
\newblock Added-mass effect in the design of partitioned algorithms for
  fluid-structure problems.
\newblock {\em Comput. Methods Appl. Mech. Engrg.}, 194(42-44):4506--4527,
  2005.

\bibitem{CSS1}
Daniel Coutand and Steve Shkoller.
\newblock Motion of an elastic solid inside an incompressible viscous fluid.
\newblock {\em Arch. Ration. Mech. Anal.}, 176(1):25--102, 2005.

\bibitem{CSS2}
Daniel Coutand and Steve Shkoller.
\newblock The interaction between quasilinear elastodynamics and the
  {N}avier-{S}tokes equations.
\newblock {\em Arch. Ration. Mech. Anal.}, 179(3):303--352, 2006.

\bibitem{Gunzburger}
Q.~Du, M.~D. Gunzburger, L.~S. Hou, and J.~Lee.
\newblock Analysis of a linear fluid-structure interaction problem.
\newblock {\em Discrete Contin. Dyn. Syst.}, 9(3):633--650, 2003.

\bibitem{Duyckaerts}
Thomas Duyckaerts.
\newblock Optimal decay rates of the energy of a hyperbolic-parabolic system
  coupled by an interface.
\newblock {\em Asymptot. Anal.}, 51(1):17--45, 2007.

\bibitem{GaldiHandbook}
Giovanni~P. Galdi.
\newblock On the motion of a rigid body in a viscous liquid: a mathematical
  analysis with applications.
\newblock In {\em Handbook of mathematical fluid dynamics, {V}ol. {I}}, pages
  653--791. North-Holland, Amsterdam, 2002.

\bibitem{HansenZuazua}
Scott Hansen and Enrique Zuazua.
\newblock Exact controllability and stabilization of a vibrating string with an
  interior point mass.
\newblock {\em SIAM J. Control Optim.}, 33(5):1357--1391, 1995.

\bibitem{IgnatovaKukavica}
Mihaela Ignatova, Igor Kukavica, Irena Lasiecka, and Amjad Tuffaha.
\newblock On well-posedness for a free boundary fluid-structure model.
\newblock {\em J. Math. Phys.}, 53(11):115624, 13, 2012.

\bibitem{KochZauzua}
Herbert Koch and Enrique Zuazua.
\newblock A hybrid system of {PDE}'s arising in multi-structure interaction:
  coupling of wave equations in {$n$} and {$n-1$} space dimensions.
\newblock In {\em Recent trends in partial differential equations}, volume 409
  of {\em Contemp. Math.}, pages 55--77. Amer. Math. Soc., Providence, RI,
  2006.

\bibitem{Kuk}
I.~Kukavica and A.~Tuffaha.
\newblock Solutions to a fluid-structure interaction free boundary problem.
\newblock {\em DCDS-A}, 32(4):1355--1389, 2012.

\bibitem{Kuk2}
Igor Kukavica and Amjad Tuffaha.
\newblock Solutions to a free boundary problem of fluid-structure interaction.
\newblock {\em Indiana Univ. Math. J.}, 61:1817--1859, 2012.

\bibitem{LionsHiddenRegularity}
I.~Lasiecka, J.-L. Lions, and R.~Triggiani.
\newblock Nonhomogeneous boundary value problems for second order hyperbolic
  operators.
\newblock {\em J. Math. Pures Appl. (9)}, 65(2):149--192, 1986.

\bibitem{LionsMagenes}
J.-L. Lions and E.~Magenes.
\newblock {\em Non-homogeneous boundary value problems and applications. {V}ol.
  {I}}.
\newblock Springer-Verlag, New York, 1972.
\newblock Translated from the French by P. Kenneth, Die Grundlehren der
  mathematischen Wissenschaften, Band 181.

\bibitem{LionsMagenes2}
J.-L. Lions and E.~Magenes.
\newblock {\em Non-homogeneous boundary value problems and applications. {V}ol.
  {II}}.
\newblock Springer-Verlag, New York, 1972.
\newblock Translated from the French by P. Kenneth, Die Grundlehren der
  mathematischen Wissenschaften, Band 182.

\bibitem{BorSun}
Boris Muha and Sun{\v c}ica {\v C}ani{\'c}.
\newblock Existence of a {W}eak {S}olution to a {N}onlinear
  {F}luid--{S}tructure {I}nteraction {P}roblem {M}odeling the {F}low of an
  {I}ncompressible, {V}iscous {F}luid in a {C}ylinder with {D}eformable
  {W}alls.
\newblock {\em Arch. Ration. Mech. Anal.}, 207(3):919--968, 2013.

\bibitem{SunBorMulti}
Boris Muha and Sun{\v{c}}ica {\v{C}}ani{\'c}.
\newblock Existence of a solution to a fluid--multi-layered-structure
  interaction problem.
\newblock {\em J. Differential Equations}, 256(2):658--706, 2014.

\bibitem{raymond2013fluid}
Jean-Pierre Raymond and Muthusamy Vanninathan.
\newblock A fluid--structure model coupling the navier-stokes equations and the
  lam{\'e} system.
\newblock {\em Journal de Math{\'e}matiques Pures et Appliqu{\'e}es}, 2013.
\newblock In Press.

\bibitem{SunBorHyp2012}
Sun\v{c}ica \v{C}ani\'{c} and Boris Muha.
\newblock A nonlinear moving-boundary problem of
  parabolic-hyperbolic-hyperbolic type arising in fluid-multi-layered structure
  interaction problems.
\newblock American Institute of Mathematical Sciences (AIMS) Publications. Eds:
  F. Ancona, A. Bressan, P. Marcati, A. Marson. Accepted., 2013.

\bibitem{ZhangZuazuaCRAS1}
Xu~Zhang and Enrique Zuazua.
\newblock Control, observation and polynomial decay for a coupled heat-wave
  system.
\newblock {\em C. R. Math. Acad. Sci. Paris}, 336(10):823--828, 2003.

\bibitem{ZhangZuazuaCRAS2}
Xu~Zhang and Enrique Zuazua.
\newblock Polynomial decay and control of a 1-d model for fluid-structure
  interaction.
\newblock {\em C. R. Math. Acad. Sci. Paris}, 336(9):745--750, 2003.

\bibitem{ZhangZuazuaJDE04}
Xu~Zhang and Enrique Zuazua.
\newblock Polynomial decay and control of a {$1-d$} hyperbolic-parabolic
  coupled system.
\newblock {\em J. Differential Equations}, 204(2):380--438, 2004.

\bibitem{ZhangZuazuaARMA07}
Xu~Zhang and Enrique Zuazua.
\newblock Long-time behavior of a coupled heat-wave system arising in
  fluid-structure interaction.
\newblock {\em Arch. Ration. Mech. Anal.}, 184(1):49--120, 2007.

\end{thebibliography}
\end{document}